\documentclass{amsart}
\usepackage{amsmath}
\usepackage{amssymb}
\usepackage{color}
\usepackage{amscd}
\usepackage[mathscr]{euscript}
 \let\mathscr\relax
\usepackage[scr]{rsfso}
\usepackage[all,cmtip]{xy}
\input xy
\xyoption{all}
\newtheorem{Lemma}{Lemma}[section]
\newtheorem{remark}[Lemma]{Remark}

\newtheorem{theorem}[Lemma]{Theorem}

\newtheorem{proposition}[Lemma]{Proposition}

\newtheorem{example}[Lemma]{Example}

\newtheorem{conjecture}[Lemma]{Conjecture}
\newcommand{\Cal}[1]{{\mathcal #1}}

\newcommand{\Hom}{\operatorname{Hom}}
\newcommand{\cHom}{\mathcal{H}}

\newcommand{\Spec}{\operatorname{Spec}}

\DeclareMathOperator{\Rng}{\mathbf {Ring}}
\DeclareMathOperator{\Top}{\mathbf {Top}}
\DeclareMathOperator{\Set}{\mathbf {Set}}
\DeclareMathOperator{\ParOrd}{\mathbf {ParOrd}}

\DeclareMathOperator{\Max}{Max}
\DeclareMathOperator{\Div}{Div}

\DeclareMathOperator{\Cpr}{Cpr}
\DeclareMathOperator{\ass}{ass}
\DeclareMathOperator{\CMon}{\mathbf{CMon}}

\DeclareMathOperator{\Reg}{Reg}

\DeclareMathOperator{\divv}{div}

\newcommand{\cmat}{\left(\begin{array}}
\newcommand{\fmat}{\end{array}\right)}

\newcommand{\N}{\mathbb N}
\newcommand{\Z}{\mathbb{Z}}

\newcommand{\M}{\mathbb{M}}
\newcommand{\PP}{\mathbb{P}}

\usepackage{stackrel}

   \begin{document}
   \title{
  On a partially ordered set associated to ring morphisms}
  \author[Alberto Facchini]{Alberto Facchini}
\address{Dipartimento di Matematica, Universit\`a di Padova, 35121 Padova, Italy}
 \email{facchini@math.unipd.it}
\thanks{The first author is partially supported by Dipartimento di Matematica ``Tullio Levi-Civita'' of Universit\`a di Padova (Project BIRD163492/16 ``Categorical homological methods in the study of algebraic structures'' and Research program DOR1828909 ``Anelli e categorie di moduli''). }
 \author[Leila Heidari Zadeh]{Leila Heidari Zadeh}
\address{Department of Mathematics, University of Kurdistan, P. O. Box 416, Sanandaj, Iran.}
 \email{heidaryzadehleila@yahoo.com; l.heidaryzadeh@sci.uok.ac.ir}
   \keywords{Ring morphism, Partially ordered set, Contravariant functor, Universal inverting mapping of rings. \\ \protect \indent 2010 {\it Mathematics Subject Classification.} Primary 16B50.
Secondary 16S85.} 
      \begin{abstract} We associate to any ring $R$ with identity a partially ordered set $\Hom(R)$, whose elements are all pairs $(\mathfrak a,M)$, where $\mathfrak a=\ker\varphi$ and $M=\varphi^{-1}(U(S))$ for some ring morphism $\varphi$ of $R$ into an arbitrary ring $S$. Here $U(S)$ denotes the group of units of $S$. The assignment $R\mapsto\Hom(R)$ turns out to be a contravariant functor of the category $\Rng$ of associative rings with identity to the category $\ParOrd$ of partially ordered sets. The maximal elements of $\Hom(R)$ constitute a subset $\Max(R)$ which, for commutative rings $R$, can be identified with the Zariski spectrum $\Spec(R)$ of $R$. Every pair $(\mathfrak a,M)$ in $\Hom(R)$ has a canonical representative, that is, there is a universal ring morphism $\psi\colon R\to S_{(R/\mathfrak a,M/\mathfrak a)} $ corresponding to the pair $(\mathfrak a,M)$, where the ring $S_{(R/\mathfrak a,M/\mathfrak a)} $ is constructed as a universal inverting $R/\mathfrak a$-ring in the sense of Cohn. Several properties of the sets $\Hom(R)$ and $\Max(R)$ are studied. \end{abstract}

    \maketitle

\section{Introduction}

In this paper, we study a contravariant functor $\Hom(-)\colon \Rng\to\ParOrd$ from the category $\Rng$ of all associative rings with identity to the category $\ParOrd$ of partially ordered sets. This functor associates to every ring $R$ the set of all pairs $(\mathfrak a,M)$, where $\mathfrak a=\ker\varphi$ and $M=\varphi^{-1}(U(S))$ for some ring morphism $\varphi\colon R\to S$. Here $S$ is any other ring, that is, any  object of $\Rng$, and $U(S)$ denotes the group of units (=\,invertible elements) of $S$. With respect to a suitable partial order, the set $\Hom(R)$  turns out to be a meet-semilattice (Lemma~\ref{2.6}). The idea is to measure and classify, via the study of the partially ordered set $\Hom(R)$, all ring morphisms from the fixed ring $R$ to any other ring $S$. 

We have at least five motivations to study our functor $\Hom(-)$:

(1) We want to generalize the theory developed by Bavula for left Ore localizations \cite{VB2,VB3,VB4} to arbitrary ring morphisms. In those papers, Bavula discovered the importance of maximal left denominator sets. Therefore here we  want to extend his idea from ring morphisms $R\to[S^{-1}]R$ that arise as left Ore localizations to arbitrary ring morphisms $\varphi\colon R\to S$. In view of Bavula's results, we pay a particular attention to the maximal elements of the partially ordered set $\Hom(R)$. For every ring $R$, the subset $\Max(R)$ of all maximal elements of $\Hom(R)$ is always non-empty (Theorem~\ref{non-empty}).

(2) For a commutative ring $R$, the set $\Max(R)$ is in one-to-one correspondence with the Zarisky spectrum $\Spec(R)$ of $R$ (Proposition~\ref{maximal element}). Thus $\Max(R)$ could be used as a good substitute for the spectrum of a possibly non-commutative ring~$R$. Unluckily, the assignment $R\mapsto\Max(R)$ is not a contravariant functor (Theorem~\ref{no contravariant}). This is not quite surprising, because, in the commutative case, the maximal spectrum, i.e., the topological subspace of $\Spec(R)$ whose elements are all maximal ideals of the commutative ring $R$, is not a functor either.  All this is related to the paper \cite{Reyes} by Manuel Reyes. Notice that the category $\ParOrd$ of partially ordered sets is isomorphic to the category of all Alexandrov $T_0$-spaces, which is a full subcategory of the category $\Top$ of topological spaces. Thus our contravariant functor $\Hom(-)$ can be also viewed as a functor of $\Rng$ into $\Top$.

(3) The $\Hom$ of a direct limit of rings $R_i$ is the inverse limit of the corresponding partially ordered sets $\Hom(R_i)$ (Theorem~\ref{3.4}). We are motivated to the study of the (good) behavior of our functor $\Hom(-)$  with respect to direct limits of rings, because spectra of commutative monoids has a similar behavior \cite[Corollary~2.2]{Ilia}. Notice that
Reyes' universal contravariant functor $p$-$\Spec\colon\Rng\to\Set$ can be defined as the inverse limit of the spectra of the commutative subrings of $R$ \cite[Proposition~2.14]{Reyes}.

(4) An approach similar to ours appears in the paper \cite{Vale} by Vale. He also considers a contravariant functor  from the category $\Rng$, but  to the category of ringed spaces. When the ring $R$ is commutative, he also gets a sort of ``completion'' of $\Spec(R)$. 

(5) Finally, the partially ordered set $\Hom(R)$ always has a least element, the pair $(0,U(R))$, which corresponds to the identity morphism $R\to R$. More generally, like in Bavula's case, the set $\Hom(R)$ has a natural partition into subsets $\Hom(R,\mathfrak a)$ (Section~\ref{2}). The least elements of these subsets $\Hom(R,\mathfrak a)$, with $\mathfrak a$ contained in the Jacobson radical $J(R)$ of $R$, correspond to {\em local} morphisms (Proposition~\ref{7.4}), that is, to the ring morphisms  $\varphi\colon R\to S$ such that, for every $r\in R$, $\varphi(r)$ invertible in $S$ implies $r$ invertible in $R$. Thus our interest in the functor $\Hom(-)$ is also motivated by the several applications of local morphisms \cite{Camps and Dicks, dolors1}. Notice that the subset $\Hom(R,0)$ classifies all ring extensions $\varphi\colon R\hookrightarrow S$.

Every pair $(\mathfrak a,M)$ in $\Hom(R)$ has a canonical representative, that is, a universal ring morphism $\psi\colon R\to S_{(R/\mathfrak a,M/\mathfrak a)} $ corresponding to the pair $(\mathfrak a,M)$ (Theorem~\ref{nice}). The ring $S_{(R/\mathfrak a,M/\mathfrak a)} $ is constructed as a universal inverting $R/\mathfrak a$-ring in the sense of Cohn \cite{Cohn}. Any other ring morphism $\varphi\colon R\to S$ corresponding to $(\mathfrak a,M)$ has a canonical factorization through $\psi$ (Theorem~\ref{6.3}). One of the mappings appearing in this factorization of $\varphi$ is a ring epimorphism $\varphi|^T\colon R\to T$, which still corresponds to the pair $(\mathfrak a,M)$. Ring epimorphisms, that is, epimorphisms in the category $\Rng$, currently play a predominant role in Homological Algebra \cite{AS, P, FN1, FN2, Len}, in particular left flat morphism, that is, when the codomain is a flat left $R$-module.
The functor $\Hom(-)$ is not  representable (Section~\ref{2}).

The meet-semilattice $\Hom(R)$ 
has a smallest element $(0,U(R))$, but does not have a greatest element in general. Hence, for some results, instead of $\Hom(R)$, it is more convenient to enlarge the partially ordered set $\Hom(R)$ adjoining to it a further element, a new greatest element $1$, setting $\overline{\Hom(R)}:=\Hom(R)\,\dot{\cup}\,\{1\}$. In some sense, this new greatest element $1$ corresponds to the zero morphism $R\to S$ for any ring $S$. This enlarged partially ordered set $\overline{\Hom(R)}$ is a bounded lattice (Theorem~\ref{5.3}).

Finally, we specialize some of our results to Bavula's case of left ring of fractions. In Bavula's case, the ring morphism $\varphi\colon R\to S$ is the canonical mapping of $R$ into the right ring of fractions $S$ of $R$ with respect to some right denominator set. Such a $\varphi$ is clearly a ring epimorphism.

\medskip

Throughout, all rings are associative, with identity $1\ne 0$, and all ring morphisms send $1$ to $1$. The group of (right and left) invertible elements of $R$ will be denoted by $U(R)$, and the Jacobson radical of $R$ will be denoted by $J(R)$.

\section{The partially ordered set $\Hom(R)$}\label{2}

Let $R$ be a ring. We associate to each ring morphism $\varphi\colon R\to S$ into any other ring $S$ the pair $(\mathfrak a, M)$, where $\mathfrak a:=\ker(\varphi)$ is the kernel of $\varphi$ and $M:=\varphi^{-1}(U(S))$ is the inverse image of the group of units $U(S)$ of $S$. In the next lemmas, we  collect the basic properties of these pairs $(\mathfrak a, M)$. Recall that a monoid $S$ is {\em cancellative} if, for every $x,y,z\in S$, $xz=yz$ implies $x=y$ and $zx=zy$ implies $x=y$. An element $x$ of a ring $R$ is {\em regular} if, for all $r\in R$, $rx=0$ implies $r=0$ and $xr=0$ implies $r=0$.

\begin{Lemma}\label{1} Let $\varphi\colon R\to S$ be a ring morphism and $(\mathfrak a, M)$ its associated pair. Then:
\begin{enumerate}
\item $M$ is a submonoid of the multiplicative monoid $R$.
\item $U(R)\subseteq M$.
\item $M=M+\mathfrak a=M+\mathfrak a+J(R)$ and $\mathfrak a\cap M=\emptyset$. 
\item $M/\mathfrak a:=\{\,m+\mathfrak a\mid m\in M\,\}$ consists of regular elements of $R/\mathfrak a$. In particular,  $M/\mathfrak a$ is a cancellative submonoid of the multiplicative monoid $R/\mathfrak a$.\end{enumerate}\end{Lemma}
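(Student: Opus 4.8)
The plan is to read all four assertions off the two defining equations $\mathfrak a=\ker\varphi$ and $M=\varphi^{-1}(U(S))$, using only that $U(S)$ is a subgroup of the multiplicative monoid of $S$ and that $\varphi$ is a unital ring morphism. For~(1), since $\varphi(1)=1\in U(S)$ we get $1\in M$, and if $m,m'\in M$ then $\varphi(mm')=\varphi(m)\varphi(m')\in U(S)$ because $U(S)$ is closed under products; hence $M$ is a submonoid of the multiplicative monoid of $R$. For~(2), if $u\in U(R)$ with inverse $u^{-1}$, then $\varphi(u)\varphi(u^{-1})=\varphi(u^{-1})\varphi(u)=\varphi(1)=1$, so $\varphi(u)\in U(S)$ and $u\in M$.

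For the disjointness and the first equality in~(3): if some $x$ lay in $\mathfrak a\cap M$, then $\varphi(x)=0$ would be a unit of $S$, forcing $1=0$ in $S$ against the standing convention $1\ne 0$; hence $\mathfrak a\cap M=\emptyset$. The inclusion $M\subseteq M+\mathfrak a$ is trivial since $0\in\mathfrak a$, while for $m\in M$ and $a\in\mathfrak a$ one has $\varphi(m+a)=\varphi(m)+\varphi(a)=\varphi(m)\in U(S)$, so $m+a\in M$; thus $M=M+\mathfrak a$.

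The enlargement by $J(R)$, i.e.\ the equality $M+\mathfrak a=M+\mathfrak a+J(R)$, is the step I expect to be the main obstacle. The natural route combines $1+J(R)\subseteq U(R)\subseteq M$ with the multiplicative closure from~(1): for $m\in M$ and $j\in J(R)$ the element $m(1+j)=m+mj$ lies in $M$, since $1+j\in U(R)$. The delicate point is to promote this to the genuine additive statement $m+j\in M$ for \emph{arbitrary} $j\in J(R)$, and this amounts to controlling the image $\varphi(J(R))$ inside $S$. The clean sufficient mechanism is $\varphi(J(R))\subseteq J(S)$, after which $\varphi(m)+\varphi(j)$ is a unit because a unit plus a radical element is a unit, i.e.\ $\varphi(m)+\varphi(j)=\varphi(m)\bigl(1+\varphi(m)^{-1}\varphi(j)\bigr)$ with $\varphi(m)^{-1}\varphi(j)\in J(S)$. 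I expect this absorption of $J(R)$ to be where the real work lies, and to require either the passage to the quotient $R/\mathfrak a$ or a hypothesis under which the radical is carried into the radical (for a general morphism the image of $J(R)$ need not sit inside $J(S)$).

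Finally, for~(4), the canonical surjection $\pi\colon R\to R/\mathfrak a$ is a morphism of multiplicative monoids, so $M/\mathfrak a=\pi(M)$ is a submonoid of the multiplicative monoid of $R/\mathfrak a$. To see that each $m+\mathfrak a$ is regular, suppose $(r+\mathfrak a)(m+\mathfrak a)=0$, that is, $rm\in\ker\varphi$; then $\varphi(r)\varphi(m)=0$, and right-cancelling the unit $\varphi(m)$ gives $\varphi(r)=0$, i.e.\ $r+\mathfrak a=0$. The symmetric argument, left-cancelling $\varphi(m)$, treats $(m+\mathfrak a)(r+\mathfrak a)=0$. Since a submonoid of a ring consisting of regular elements is automatically cancellative (from $xz=yz$ one gets $(x-y)z=0$, whence $x=y$, and likewise on the other side), this yields the last assertion.
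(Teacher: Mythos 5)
Your parts (1), (2), (4), the disjointness $\mathfrak a\cap M=\emptyset$, and the equality $M=M+\mathfrak a$ are all correct, and they are exactly the verifications the paper dismisses as easy. The one step you leave open --- absorbing $J(R)$, i.e.\ proving $M+\mathfrak a+J(R)\subseteq M$ --- is indeed the only nontrivial claim, and your diagnosis of why it is delicate is exactly right: what comes for free is the multiplicative statement $m(1+j)\in M$ (since $1+j\in U(R)\subseteq M$ and $M$ is multiplicatively closed), and passing from $m+mj$ to $m+j$ for arbitrary $j\in J(R)$ would require something like $\varphi(J(R))\subseteq J(S)$, which fails for general morphisms.

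In fact your suspicion cannot be discharged, because the assertion $M=M+\mathfrak a+J(R)$ is false as stated. Take $R=\Z_{(p)}$, $S=\Q$ and $\varphi$ the inclusion; then $\mathfrak a=0$, $M=\Z_{(p)}\setminus\{0\}$ and $J(R)=p\Z_{(p)}$, so $p+0+(-p)=0$ lies in $M+\mathfrak a+J(R)$ but not in $M$. The paper's own argument breaks at precisely the point you flagged: it writes $(M+\mathfrak a)(1+J(R))=M+\mathfrak a+MJ(R)+\mathfrak aJ(R)\supseteq M+\mathfrak a+1_RJ(R)$, but an element of the set product $(M+\mathfrak a)(1+J(R))$ has the form $m+a+mj+aj$ with the \emph{same} $m$ multiplying $j$, so one cannot specialize the factor $M$ occurring in $MJ(R)$ to $1_R$ independently of the summand $m$. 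That computation only yields $1+\mathfrak a+J(R)\subseteq M$ and, multiplicatively, $M(1+\mathfrak a+J(R))\subseteq M$, not the stated additive absorption. So your proof establishes everything in the lemma that is actually true, and the step you could not supply is not a gap in your argument but an error in the statement and in the paper's proof of it.
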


\begin{proof} (1), (2) and (4) are easy.

(3) The inclusions $M\subseteq M+\mathfrak a\subseteq M+\mathfrak a+J(R)$ are trivial. In order to prove that $M+\mathfrak a+J(R)\subseteq M$,
notice that $M+\mathfrak a\subseteq M$ and $1+J(R)\subseteq U(R)\subseteq M$. Since $M$ is multiplicatively closed and contains $1_R$, it follows that $M\supseteq (M+\mathfrak a)(1+J(R))=M+\mathfrak a+MJ(R)+\mathfrak aJ(R)\supseteq M+\mathfrak a+1_RJ(R)=M+\mathfrak a+J(R)$.
\end{proof}

\begin{remark} {\rm The monoid $M$ is not cancellative in general. As an example consider $R=\Z/6\Z$, $S=\Z/2\Z$, $\varphi\colon \Z/6\Z\to\Z/2\Z$ the canonical projection, $x=1+6\Z$ and $y=z=3+6\Z$. Then $x,y,z\in M$ and $xz=yz$, but $x\ne y$.}\end{remark}

Recall that a multiplicatively closed subset $M$ of a ring $R$ is {\em saturated} if, for every $x,y\in R$, $xy\in M$ implies $x\in M$ and $y\in M$. A ring $R$ is {\em directly finite} if, for every $x,y\in R$, $xy=1$ implies $yx=1$.

\begin{Lemma} Let $\varphi\colon R\to S$ be a ring morphism and $(\mathfrak a, M)$ its associated pair. Then:
\begin{enumerate}
\item If $S$ is a (not-necessarily commutative) integral domain, then the ideal $\mathfrak a$ is completely prime.
\item If $S$ is a division ring, then $R$ is the disjoint union of $\mathfrak a$ and $M$, i.e., $\{\mathfrak a, M\}$ is a partition of the set $R$.
\item If $S$ is a directly finite ring, e.g., if $S$ is an integral domain, then $M$ is a saturated multiplicatively closed subset of $R$.
\end{enumerate}\end{Lemma}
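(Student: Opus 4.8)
The plan is to handle the three parts independently; in each the hypothesis on $S$ is a statement about the multiplicative structure of $S$, and the conclusion is obtained by pulling that statement back through $\varphi$, using $\mathfrak a=\ker\varphi$ and $M=\varphi^{-1}(U(S))$.

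For (1), if $xy\in\mathfrak a$ then $\varphi(x)\varphi(y)=\varphi(xy)=0$, and the absence of nonzero zero divisors in the domain $S$ forces $\varphi(x)=0$ or $\varphi(y)=0$, i.e.\ $x\in\mathfrak a$ or $y\in\mathfrak a$; properness of $\mathfrak a$ is clear since $\varphi(1)=1\ne0$ in $S$. For (2), the disjointness $\mathfrak a\cap M=\emptyset$ is already supplied by Lemma~\ref{1}(3), so it only remains to show that every $r\in R$ lies in one of the two sets: this holds because $\varphi(r)$ is either $0$, giving $r\in\mathfrak a$, or nonzero, and in a division ring a nonzero element is a unit, giving $r\in M$. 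Both parts are essentially immediate.

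The substance is in (3). Since $M$ is already a submonoid by Lemma~\ref{1}(1), only saturation is at issue. I would take $xy\in M$, so that $u:=\varphi(xy)=\varphi(x)\varphi(y)$ is a unit of $S$, and then observe that $\varphi(x)\bigl(\varphi(y)u^{-1}\bigr)=1$ exhibits a right inverse for $\varphi(x)$, while $\bigl(u^{-1}\varphi(x)\bigr)\varphi(y)=1$ exhibits a left inverse for $\varphi(y)$. The main point, and the reason direct finiteness is the right hypothesis, is that in a directly finite ring a one-sided inverse is automatically two-sided, so both $\varphi(x)$ and $\varphi(y)$ are genuine units and hence $x,y\in M$. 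The parenthetical remark that integral domains are directly finite I would dispatch by noting that $xy=1$ gives $x(yx-1)=0$ with $x\ne0$, whence $yx=1$.

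The one place demanding care is exactly the asymmetry just noted in part (3): the factorization of $u$ yields a right inverse on one side and a left inverse on the other, and it is precisely direct finiteness that reconciles the two into full invertibility. Getting this bookkeeping straight is the only genuine, if modest, obstacle; the rest follows directly from the ring-theoretic hypotheses on $S$ together with Lemma~\ref{1}.
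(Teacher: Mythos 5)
Your proposal is correct and follows essentially the same route as the paper: parts (1) and (2) are the immediate pullback arguments the paper leaves to the reader, and for part (3) you produce exactly the paper's one-sided inverses $\varphi(y)u^{-1}$ and $u^{-1}\varphi(x)$ (the paper writes them via the single element $s=u^{-1}$) and invoke direct finiteness to upgrade them, with the same cancellation trick showing domains are directly finite. Nothing further is needed.
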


\begin{proof} (3) Suppose $S$ directly finite, $x,y\in R$ and $xy\in M=\varphi^{-1}(U(S))$. Then $\varphi(x)\varphi(y)=\varphi(xy)\in U(S)$. Hence there exists $s\in S$ such that $\varphi(x)\varphi(y)s=1$ and $s\varphi(x)\varphi(y)=1$. Thus $\varphi(x)$ is right invertible and $\varphi(y)$ is left invertible. Since $S$ is directly finite, we have that $\varphi(x)$ and $\varphi(y)$ are both invertible in $S$, so that $x\in M$ and $y\in M$.

Notice that every integral domain is directly finite, because if $x,y$ are element of an integral domain $S$ and $xy=1$, then $yxy=y$, so $(yx-1)y=0$, hence $yx=1$.
\end{proof}

We will now deal with preorders on a set $X$, that is, reflexive and transitive relations on $X$. Recall that, if $X$ is a set, or more generally a class, and $\rho$ is a preorder on $X$, then it is possible to associate to $\rho$ an equivalence relation $\sim_\rho$ on $X$ and a partial order $\le_\rho$ on the quotient set $X/\!\!\sim_\rho$. The equivalence relation $\sim_\rho$ on $X$ is defined, for every $x,y\in X$, by $x\sim_\rho y$ if $x\rho y$ and $y\rho x$. The partial order $\le_\rho$ on the quotient set $X/\!\!\sim_\rho:=\{\,[x]_{\sim_\rho}\mid x\in X\,\}$ is defined by $[x]_{\sim_\rho}\le_\rho[y]_{\sim_\rho}$ if $x\rho y$. 

On the class $\cHom(R)$ of all morphisms $\varphi\colon R\to S$ of $R$ into arbitrary rings $S$, there are two natural preorders. If $\varphi\colon R\to S$, $\varphi'\colon R\to S'$ are two ring morphisms, we have a first preorder $\rho$ on $\cHom(R)$, defined setting $\varphi\rho \varphi'$ if $\ker(\varphi)\subseteq \ker(\varphi')$ and $\varphi^{-1}(U(S))\subseteq \varphi'{}^{-1}(U(S))$. A second preorder $\sigma$ on $\cHom(R)$ is defined setting $\varphi\,\sigma\,\varphi'$ if there exists a ring morphism $\psi\colon S\to S'$ such that $\psi\varphi=\varphi'$.

Correspondingly, there is a first equivalence relation $\sim$ on the class $\cHom(R)$, defined, for all ring morphisms $\varphi\colon R\to S$, $\varphi'\colon R\to S'$ with associated pairs $(\mathfrak a, M)$, $(\mathfrak a', M')$ respectively, by $\varphi\sim\varphi'$ if $(\mathfrak a, M)=(\mathfrak a', M')$. That is, $\varphi\sim\varphi'$  if and only if $\ker(\varphi)=\ker(\varphi')$ and $\varphi^{-1}(U(S))=\varphi'^{-1}(U(S'))$. Let $\Hom(R):=\cHom(R)/\!\!\sim$ denote the set (class) of all equivalence classes $[\varphi]_{\sim}$ modulo $\sim$, that is, equivalently, the set of all pairs $(\ker(\varphi),\varphi^{-1}(U(S)))$. The partial order $\le $ on 
$\Hom(R)=\cHom(R)/\!\!\sim$ associated to the preorder $\rho$ on $\cHom(R)$ is defined by setting $(\mathfrak a, M)\le (\mathfrak a', M')$ if $\mathfrak a\subseteq \mathfrak a'$ and $M\subseteq M'$.

As far as the second natural preorder $\sigma$ on $\cHom(R)$ is concerned, the equivalence relation $\equiv$ on $\cHom(R)$ associated to $\sigma$ is  defined, for every $\varphi\colon R\to S,\ \varphi'\colon R\to S'$ in $\cHom(R)$, by $\varphi\equiv\varphi'$ if there exist  ring morphisms $\psi\colon S\to S'$ and $\psi'\colon S'\to S$ such that $\psi\varphi=\varphi'$ and $\psi'\varphi'=\varphi$. The partial order $\preceq $ on the quotient class 
$\cHom(R)/\equiv$, associated to the preorder $\sigma$ on $\cHom(R)$, is defined by setting $[\varphi]_{\equiv}\preceq[\varphi']_{\equiv}$ if $\varphi\,\sigma\,\varphi'$.

\begin{remark}\label{4}{\rm\, If there exists a ring morphism $\psi\colon S\to S'$ such that $\psi\varphi=\varphi'$, then  $(\mathfrak a,M)\le (\mathfrak a',M')$. Equivalently, for all $\varphi\colon R\to S$, $\varphi'\colon R\to S'$ in $\cHom(R)$, $\varphi\,\sigma\,\varphi'$ implies $\varphi\,\rho\,\varphi'$.

Thus, for all $\varphi\colon R\to S$, $\varphi'\colon R\to S'$ in $\cHom(R)$, $\varphi\equiv \varphi'$ implies $(\mathfrak a,M)= (\mathfrak a',M')$, i.e., $\varphi\sim\varphi'$. Equivalently, the identity mapping $\cHom(R)\to\cHom(R)$ is a preorder morphism of $(\cHom(R),\sigma)$ onto $(\cHom(R),\rho)$. 
Similarly, there is an induced surjective morphism of factor classes $$\cHom(R)/\!\!\equiv{}\to{} \cHom(R)/\!\!\sim{}=\Hom(R), \qquad[\varphi]_\equiv\mapsto [\varphi]_\sim.$$

The implication $\varphi\,\sigma\, \varphi'$ implies $\varphi\,\rho\,\varphi'$ 
  cannot be reversed in general, that is, there are morphisms $\varphi\colon R\to S$ and  $\varphi'\colon R\to S'$ with $(\mathfrak a,M)\le (\mathfrak a',M')$,  but for which there does not exist a ring morphism $\psi\colon S\to S'$ with $\psi\varphi=\varphi'$. For instance, let $k$ be a finite field, $\overline{k}$ its algebraic closure, $\M_2(k)$ the ring of $2\times 2$ matrices with entries in $k$, and $\varphi\colon k\to \overline{k}$ and  $\varphi'\colon k\to \M_2(k)$ the canonical embeddings. Then $\overline{k}$ and $\M_2(k)$ are simple rings, so that all ring morphisms $\psi\colon \overline{k}\to \M_2(k)$ are injective. But $k$ is finite and $ \overline{k}$ is infinite, so that there is no ring morphism $\psi\colon \overline{k}\to \M_2(k)$. 
  
The implication $\varphi\,\sigma\, \varphi'$ implies $(\mathfrak a,M)\le(\mathfrak a',M')$ can be reversed in some special cases, for instance when we restrict our attention to localizations at left denominator sets. See Remark~\ref{inverse}}.\end{remark}

\begin{proposition}\label{contravariant} Let $\Rng$ be the category of rings with identity and $\ParOrd$ the category of partially ordered sets. Then $\Hom(-)\colon\Rng\to\ParOrd$ is a contravariant functor.\end{proposition}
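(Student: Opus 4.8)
The plan is to pin down the action of $\Hom(-)$ on morphisms, check that this action lands in $\ParOrd$ (i.e.\ produces order-preserving maps of posets), and then verify the two functor axioms with the arrows reversed, which is what contravariance amounts to. On objects there is nothing to do: for each ring $R$, the set $\Hom(R)$ is already a partially ordered set under the order $(\mathfrak a,M)\le(\mathfrak a',M')$ iff $\mathfrak a\subseteq\mathfrak a'$ and $M\subseteq M'$, as set up in the discussion preceding the statement.

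For the action on morphisms, given a ring morphism $f\colon R\to R'$, I would define $\Hom(f)\colon\Hom(R')\to\Hom(R)$ by precomposition: if $(\mathfrak a',M')\in\Hom(R')$ is represented by some $\varphi'\colon R'\to S$, send it to the pair associated with $\varphi'\circ f\colon R\to S$. The key computation is that this associated pair is exactly $(f^{-1}(\mathfrak a'),f^{-1}(M'))$, since $\ker(\varphi'\circ f)=f^{-1}(\ker\varphi')=f^{-1}(\mathfrak a')$ and $(\varphi'\circ f)^{-1}(U(S))=f^{-1}((\varphi')^{-1}(U(S)))=f^{-1}(M')$ are the standard identities for preimages under a composite. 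Well-definedness is then almost free: the output pair $(f^{-1}(\mathfrak a'),f^{-1}(M'))$ depends only on $(\mathfrak a',M')$ and on $f$, not on the chosen representative $\varphi'$, so $\Hom(f)$ is a genuine map of sets; and it takes values in $\Hom(R)$ precisely because that pair is realized by the honest ring morphism $\varphi'\circ f$ out of $R$ (whence, by Lemma~\ref{1}, $f^{-1}(M')$ really is a submonoid and $f^{-1}(\mathfrak a')$ an ideal).

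Monotonicity is immediate, since preimage preserves inclusions: from $\mathfrak a'\subseteq\mathfrak b'$ and $M'\subseteq N'$ one gets $f^{-1}(\mathfrak a')\subseteq f^{-1}(\mathfrak b')$ and $f^{-1}(M')\subseteq f^{-1}(N')$, so $\Hom(f)$ is a morphism in $\ParOrd$. Finally I would verify the two axioms. Because $\mathrm{id}_R^{-1}$ is the identity on subsets of $R$, we get $\Hom(\mathrm{id}_R)=\mathrm{id}_{\Hom(R)}$; and for composable $f\colon R\to R'$, $g\colon R'\to R''$, the relation $(g\circ f)^{-1}=f^{-1}\circ g^{-1}$ on preimages yields $\Hom(g\circ f)=\Hom(f)\circ\Hom(g)$, which is exactly the order reversal characteristic of a contravariant functor.

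I do not expect any genuine obstacle here. The single point that requires a word rather than pure formalism is the realizability of the output pair by an actual ring morphism from $R$, and this is settled once and for all by the composite $\varphi'\circ f$; everything else is formal bookkeeping with preimages and with the fact that $\le$ on $\Hom(R)$ is defined by inclusion of the two components.
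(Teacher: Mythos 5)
Your proposal is correct and follows essentially the same route as the paper: both define $\Hom(f)$ by $(\mathfrak a',M')\mapsto(f^{-1}(\mathfrak a'),f^{-1}(M'))$ and justify that this lands in $\Hom(R)$ by observing that the pair is realized by the composite $\varphi'\circ f$, via the identities $f^{-1}(\mathfrak a')=\ker(\varphi' f)$ and $f^{-1}(M')=(\varphi' f)^{-1}(U(S))$. You merely spell out the monotonicity and the two functor axioms, which the paper leaves implicit.
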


\begin{proof} The functor $\Hom$ assigns to each ring $R$ the set $\Hom(R)$ of all pairs $(\mathfrak a,M)$, where $\mathfrak a:=\ker(\varphi)$ and $M:=\varphi^{-1}(U(S))$ for some ring morphism $\varphi\colon R\to S$, partially ordered by $\le$, where $(\mathfrak a,M)\le (\mathfrak b,N)$ if $\mathfrak a\subseteq \mathfrak b$ and $M\subseteq N$. Moreover, it assigns to each ring morphism $f\colon R\to R'$ the increasing mapping $$\Hom(f)\colon \Hom(R')\to\Hom(R),\quad (\mathfrak a',M')\in \Hom(R')\mapsto (f^{-1}(\mathfrak a'), f^{-1}(M')).$$ Notice that if $\varphi'\colon R'\to S$ is a ring morphism, $\mathfrak a':=\ker(\varphi')$ and $M':=\varphi'{}^{-1}(U(S))$, then $\varphi'f\colon R\to S$ is a ring morphism, $$f^{-1}(\mathfrak a')=\ker(\varphi'f)$$ and $$f^{-1}(M')=(\varphi'f)^{-1}(U(S)).$$\end{proof}

The functor  $\Hom(-)$ is not representable. Namely, suppose the contravariant $\Hom(-)\colon \Rng\to\Set$ representable, i.e., that there exists a ring $A$ with $\Hom(-)$ naturally isomorphic to the contravariant functor $\Hom_{\Rng}(-,A)\colon \Rng\to\Set$. Now, for every ring $A$ there always exists a ring $R$ with $\Hom_{\Rng}(R,A)=\emptyset$ (If $A$ has characteristic $0$, take for $R$ any ring of characteristic $\ne0$. If $A$ has characteristic $n\ge 2$, take for $R$ any ring of characteristic $p$ prime with $p\ne n$.) Our functor $\Hom(-)$ is such that $\Hom(R)\ne\emptyset$ for every ring $R$. Hence the functors $\Hom(-)$ and $\Hom_{\Rng}(-,A)$ can never be isomorphic.

\bigskip

For any fixed proper ideal $\mathfrak a$ of $R$, set $$\Hom(R,\mathfrak a):=\{\,(\ker(\varphi),\varphi^{-1}(U(S)))\mid \varphi\colon R\to S,\ \ker(\varphi)=\mathfrak a\,\}.$$ Clearly, $\Hom(R)$ is the disjoint union of the sets $\Hom(R,\mathfrak a)$: $$\Hom(R)=\dot{\bigcup}_{\mathfrak a\triangleleft R}\Hom(R,\mathfrak a).$$

In particular, the partial order $\le$ on $\Hom(R)$ induces a partial order on each subset $\Hom(R,\mathfrak a)$.

\bigskip

The following lemma has an easy proof.

\begin{Lemma}\label{2.6} Let $(\mathfrak a,M),(\mathfrak a',M')$ be the elements of $\Hom(R)$ corresponding to two morphisms $\varphi\colon R\to S$ and $\varphi'\colon R\to S'$. Then the element of $\Hom(R)$ corresponding to the product morphism $\varphi\times\varphi'\colon R\to S\times S'$ is $(\mathfrak a\cap\mathfrak a',M\cap M')$. \end{Lemma}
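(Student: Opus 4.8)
The plan is to compute directly the kernel and the unit-preimage of the product morphism $\varphi\times\varphi'\colon R\to S\times S'$, defined by $(\varphi\times\varphi')(r)=(\varphi(r),\varphi'(r))$, and to check that these coincide with $\mathfrak a\cap\mathfrak a'$ and $M\cap M'$ respectively. This is precisely the ``easy proof'' the lemma is advertised to have, so I expect no genuine obstacle; the whole content is two short set-equalities that follow from how units and kernels behave in a direct product of rings.

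First I would recall that the group of units of a product ring factors as a product of unit groups: an element $(s,s')\in S\times S'$ is invertible if and only if $s\in U(S)$ and $s'\in U(S')$, so that $U(S\times S')=U(S)\times U(S')$. This is the one structural fact I need, and it is standard: the inverse of $(s,s')$ must be $(s^{-1},s'^{-1})$ componentwise, and conversely componentwise inverses multiply to $(1,1)$.

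Next I would verify the kernel. For $r\in R$ we have $(\varphi\times\varphi')(r)=(0,0)$ if and only if $\varphi(r)=0$ and $\varphi'(r)=0$, i.e.\ $r\in\ker(\varphi)\cap\ker(\varphi')=\mathfrak a\cap\mathfrak a'$. Hence $\ker(\varphi\times\varphi')=\mathfrak a\cap\mathfrak a'$. For the unit-preimage, $r\in(\varphi\times\varphi')^{-1}(U(S\times S'))$ means $(\varphi(r),\varphi'(r))\in U(S)\times U(S')$ by the factorization above, which holds exactly when $\varphi(r)\in U(S)$ and $\varphi'(r)\in U(S')$, that is, when $r\in\varphi^{-1}(U(S))\cap\varphi'^{-1}(U(S'))=M\cap M'$. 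Therefore $(\varphi\times\varphi')^{-1}(U(S\times S'))=M\cap M'$.

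Combining the two computations, the pair in $\Hom(R)$ associated to $\varphi\times\varphi'$ is $(\ker(\varphi\times\varphi'),(\varphi\times\varphi')^{-1}(U(S\times S')))=(\mathfrak a\cap\mathfrak a',M\cap M')$, as claimed. The only point worth a word of care is that $\varphi\times\varphi'$ is indeed a ring morphism sending $1_R$ to $(1_S,1_{S'})=1_{S\times S'}$, so the associated pair genuinely lies in $\Hom(R)$; this is immediate. Since every step is an elementary equivalence, I do not anticipate any hard part — the lemma's value is organizational, as it identifies the meet operation $(\mathfrak a,M)\wedge(\mathfrak a',M')=(\mathfrak a\cap\mathfrak a',M\cap M')$ witnessing that $\Hom(R)$ is a meet-semilattice.
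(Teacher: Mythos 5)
Your proof is correct and is exactly the direct verification the paper has in mind when it says the lemma "has an easy proof" (the paper omits the argument entirely): compute the kernel componentwise and use $U(S\times S')=U(S)\times U(S')$. Nothing further is needed.
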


As a consequence, the partially ordered set $\Hom(R)$ turns out to be a meet-semilattice. In particular, with respect to the operation $\wedge$, $\Hom(R)$ is a commutative semigroup in which every element is idempotent and which has a zero element (=\,the least element $(0,U(R))$ of $\Hom(R)$, which corresponds to the identity morphism $R\to R$). We will see in Theorem~\ref{non-empty} that the partially ordered set $\Hom(R)$ always has maximal elements, but does not have a greatest element in general, so the semigroup $\Hom(R)$ does not have an identity in general.

\section{A universal construction}

Let $R$ be any ring and $N$ be any fixed subset of $R$. Let $X:=\{\, x_n\mid n\in N\,\}$ be a set of non-commuting indeterminates in one-to-one correspondence with the set $N$. Let $R\{X\}$ be the free $R$-ring over $X$ (\cite{Bb} and \cite[Example 1.9.20 on Page 124]{Rowen}). Then there are a canonical ring morphism $\varphi\colon R\to R\{ X\}$ and a mapping $\varepsilon\colon X\to R\{ X\}$ such that for every ring $S$, every ring morphism $\psi\colon R\to S$ and every mapping $\zeta\colon X\to S$ there is a unique ring morphism $\widetilde{\psi}\colon R\{ X\}\to S$ such that $\psi=\widetilde{\psi}\varphi$ and $\zeta=\widetilde{\psi}\varepsilon$. 

Let $I$ be the two-sided ideal of $R\{ X\}$ generated by the subset $\{\,x_nn-1\mid n\in N\,\}\cup\{\,nx_n-1\mid n\in N\,\}$ and $S_{(R,N)}:=R\{ X\}/I$. Clearly, $I$ could be the improper ideal of $R\{ X\}$ and $S_{(R,N)}$ could be the zero ring.
There is a canonical mapping $\chi_{(R,N)}\colon R\to S_{(R,N)}$, composite mapping of $\varphi\colon R\to R\{ X\}$ and the canonical projection $R\{ X\}\to R\{ X\}/I$. The $R$-ring $R\{ X\}/I$ is the universal $N$-inverting $R$-ring in the sense of \cite[Proposition~1.3.1]{Cohn}.

\begin{Lemma}\label{3.1} If $(0,M)\in\Hom(R)$ for a ring $R$, then 
the canonical ring morphism $\chi_{(R,M)}\colon R\to S_{(R,M)}$ is injective and $\chi_{(R,M)}^{-1}(U(S_{(R,M)}))=M$.\end{Lemma}

\begin{proof} If $(0,M)\in\Hom(R)$, there are  a ring $S$ and ring morphism $f\colon R\to S$ such that $(0,M)$ is associated to $f$. In particular, $f$ is an injective mapping. The morphism $f$ clearly factors through $\chi_{(R,M)}$, that is, there is a ring morphism $g\colon S_{(R,M)}\to S$ with $g\chi_{(R,M)}=f$. As $f$ is injective, 
$\chi_{(R,M)}$ is also injective.

Moreover, $g\chi_{(R,M)}=f$ implies that $M=f^{-1}(U(S))\supseteq \chi_{(R,M)}^{-1}(U(S_{(R,M)}))$. Finally, the elements of $M$ are clearly mapped to invertible elements of $S_{(R,M)}$ via $\chi_{(R,M)}$, by construction, and so $\chi_{(R,M)}^{-1}(U(S_{(R,M)}))=M$.
\end{proof}

The proof of the following lemma is immediate.

\begin{Lemma}\label{3.2} If $(\mathfrak a,M)\in\Hom(R)$, then $(\mathfrak a/\mathfrak a,M/\mathfrak a)\in\Hom(R/\mathfrak a)$.\end{Lemma}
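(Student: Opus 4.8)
The plan is to unwind the definition of $\Hom(R/\mathfrak a)$ and produce an explicit ring morphism out of $R/\mathfrak a$ whose associated pair is exactly $(\mathfrak a/\mathfrak a, M/\mathfrak a)$. Since $(\mathfrak a, M)\in\Hom(R)$, by definition there is a ring $S$ and a ring morphism $\varphi\colon R\to S$ with $\ker(\varphi)=\mathfrak a$ and $\varphi^{-1}(U(S))=M$. The natural candidate is the factorization of $\varphi$ through the quotient: because $\mathfrak a=\ker(\varphi)$, the universal property of the quotient ring yields a unique ring morphism $\overline\varphi\colon R/\mathfrak a\to S$ with $\overline\varphi\circ\pi=\varphi$, where $\pi\colon R\to R/\mathfrak a$ is the canonical projection.

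First I would check that $\overline\varphi$ is injective, so that $\ker(\overline\varphi)=0=\mathfrak a/\mathfrak a$; this is exactly the first isomorphism theorem, since $\ker(\varphi)=\mathfrak a$ forces $\overline\varphi$ to be a monomorphism on $R/\mathfrak a$. Next I would identify $\overline\varphi^{-1}(U(S))$. An element $r+\mathfrak a\in R/\mathfrak a$ lies in $\overline\varphi^{-1}(U(S))$ iff $\overline\varphi(r+\mathfrak a)=\varphi(r)$ is a unit of $S$, i.e.\ iff $r\in\varphi^{-1}(U(S))=M$. Thus $\overline\varphi^{-1}(U(S))=\{\,r+\mathfrak a\mid r\in M\,\}=M/\mathfrak a$, which is precisely the set defined in Lemma~\ref{1}(4). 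Hence the pair associated to $\overline\varphi$ is $(\mathfrak a/\mathfrak a, M/\mathfrak a)$, witnessing that this pair belongs to $\Hom(R/\mathfrak a)$.

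The only point requiring a small amount of care is the well-definedness of the map $M/\mathfrak a$ and the verification that $\overline\varphi^{-1}(U(S))$ coincides with it rather than being merely contained in it; but both inclusions are immediate from the equivalence $\varphi(r)\in U(S)\iff r\in M$, which holds since $\overline\varphi$ is built so that $\overline\varphi(r+\mathfrak a)=\varphi(r)$. There is no real obstacle here: the statement is essentially a bookkeeping consequence of passing to the quotient by the kernel, and the matching of units is automatic because $M=M+\mathfrak a$ by Lemma~\ref{1}(3), so distinct representatives of the same coset in $M/\mathfrak a$ cause no ambiguity. This is why the authors can describe the proof as immediate.
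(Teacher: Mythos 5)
Your proof is correct and is exactly the argument the paper has in mind when it declares the lemma's proof ``immediate'': factor $\varphi$ through $\pi\colon R\to R/\mathfrak a$ via the first isomorphism theorem and observe that the induced injective morphism $\overline\varphi\colon R/\mathfrak a\to S$ has associated pair $(\mathfrak a/\mathfrak a, M/\mathfrak a)$. Nothing further is needed.
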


\begin{theorem}\label{nice} Let $R$ be a ring  and $(\mathfrak a,M)$ be an element of $\Hom(R)$. Then $S_{(R/\mathfrak a,M/\mathfrak a)} $ is a non-zero ring, and if $\psi\colon R\to S_{(R/\mathfrak a,M/\mathfrak a)} $ denotes the composite mapping of the canonical projection $\pi\colon R\to R/\mathfrak a$ and $\chi_{(R/\mathfrak a,M/\mathfrak a)}\colon R/\mathfrak a\to S_{(R/\mathfrak a,M/\mathfrak a)}$, then $\ker (\psi)=\mathfrak a$ and $\psi^{-1}(U(S_{(R/\mathfrak a,M/\mathfrak a)}))=M$. Moreover, for any ring morphism $f\colon R\to S$ such that $\ker (f)\supseteq\mathfrak a$ and $f^{-1}(U(S))\supseteq M$, there is a unique ring morphism $g\colon S_{(R/\mathfrak a,M/\mathfrak a)}\to S$ such that $g\psi=f$.\end{theorem}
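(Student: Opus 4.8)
The plan is to reduce everything to the case $\mathfrak a=0$ already settled in Lemma~\ref{3.1}, using Lemma~\ref{3.2} to pass from $R$ to $R/\mathfrak a$. By Lemma~\ref{3.2} the pair $(0,M/\mathfrak a)=(\mathfrak a/\mathfrak a,M/\mathfrak a)$ lies in $\Hom(R/\mathfrak a)$, so Lemma~\ref{3.1}, applied to the ring $R/\mathfrak a$ and the monoid $M/\mathfrak a$, tells us that $\chi_{(R/\mathfrak a,M/\mathfrak a)}\colon R/\mathfrak a\to S_{(R/\mathfrak a,M/\mathfrak a)}$ is injective and that $\chi_{(R/\mathfrak a,M/\mathfrak a)}^{-1}(U(S_{(R/\mathfrak a,M/\mathfrak a)}))=M/\mathfrak a$. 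Everything else is bookkeeping built on top of this one input.

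First I would establish that $S_{(R/\mathfrak a,M/\mathfrak a)}$ is non-zero. Since $(\mathfrak a,M)\in\Hom(R)$ arises from some ring morphism and ring morphisms send $1$ to $1\ne 0$, the ideal $\mathfrak a$ is proper, hence $R/\mathfrak a\ne 0$; as $\chi_{(R/\mathfrak a,M/\mathfrak a)}$ is injective, its codomain contains a copy of $R/\mathfrak a$ and is therefore non-zero. Next I would compute the kernel and the inverse image of the units for $\psi=\chi_{(R/\mathfrak a,M/\mathfrak a)}\circ\pi$. Because $\chi_{(R/\mathfrak a,M/\mathfrak a)}$ is injective, $\ker(\psi)=\ker(\pi)=\mathfrak a$ at once. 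For the unit condition, $r\in\psi^{-1}(U(S_{(R/\mathfrak a,M/\mathfrak a)}))$ if and only if $\pi(r)=r+\mathfrak a\in\chi_{(R/\mathfrak a,M/\mathfrak a)}^{-1}(U(S_{(R/\mathfrak a,M/\mathfrak a)}))=M/\mathfrak a$, i.e. $r\in M+\mathfrak a$; by Lemma~\ref{1}(3) one has $M+\mathfrak a=M$, so $\psi^{-1}(U(S_{(R/\mathfrak a,M/\mathfrak a)}))=M$, as required.

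Finally, for the universal property, given $f\colon R\to S$ with $\ker(f)\supseteq\mathfrak a$ and $f^{-1}(U(S))\supseteq M$, the inclusion $\ker(f)\supseteq\mathfrak a=\ker(\pi)$ yields a unique factorization $f=\bar f\circ\pi$ through the projection $\pi$. The morphism $\bar f$ sends each $m+\mathfrak a\in M/\mathfrak a$ to $f(m)\in U(S)$, so $\bar f$ inverts $M/\mathfrak a$; Cohn's universal property of the $N$-inverting ring (with $N=M/\mathfrak a$) then supplies a unique $g\colon S_{(R/\mathfrak a,M/\mathfrak a)}\to S$ with $g\circ\chi_{(R/\mathfrak a,M/\mathfrak a)}=\bar f$, whence $g\psi=g\circ\chi_{(R/\mathfrak a,M/\mathfrak a)}\circ\pi=\bar f\circ\pi=f$. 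Uniqueness of $g$ follows because $\pi$ is surjective (so any such $g$ forces $g\circ\chi_{(R/\mathfrak a,M/\mathfrak a)}=\bar f$) together with the uniqueness clause in Cohn's universal property.

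I do not expect a serious obstacle: the construction has been arranged precisely so that Lemma~\ref{3.1} carries the substantive content. The only points demanding genuine care are the identity $M+\mathfrak a=M$ supplied by Lemma~\ref{1}(3), which is exactly what makes $\psi^{-1}(U(\cdots))$ land back on $M$ rather than merely on $M+\mathfrak a$, and the correct chaining of the \emph{double} factorization of $f$, first through $\pi$ and then through $\chi_{(R/\mathfrak a,M/\mathfrak a)}$, each step invoking its own uniqueness statement.
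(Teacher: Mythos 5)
Your proof is correct and follows essentially the same route as the paper: reduce to $R/\mathfrak a$ via Lemma~\ref{3.2}, apply Lemma~\ref{3.1} to get injectivity of $\chi_{(R/\mathfrak a,M/\mathfrak a)}$ and the identification of the unit preimage, and produce $g$ from the universal $(M/\mathfrak a)$-inverting property after factoring $f$ through $\pi$. The only cosmetic differences are that you invoke Cohn's universal property as a black box where the paper rederives it from the free $R/\mathfrak a$-ring, and you deduce non-vanishing from injectivity of $\chi$ on the non-zero ring $R/\mathfrak a$ rather than from the existence of a morphism into a non-zero ring; your explicit appeal to Lemma~\ref{1}(3) for $\pi^{-1}(M/\mathfrak a)=M+\mathfrak a=M$ is a detail the paper leaves implicit.
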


\begin{proof} Since $(\mathfrak a,M)\in\Hom(R)$, there are ring morphisms $\varphi\colon R\to S$ such that $\ker (\varphi)=\mathfrak a$ and $\varphi^{-1}(U(S))=M$. More generally, let $f\colon R\to S$ be any ring morphism with $\ker (f)\supseteq\mathfrak a$ and $f^{-1}(U(S))\supseteq M$.
Then $f$ factors as the composite mapping of the canonical projection $\pi\colon R\to R/\mathfrak a$ and a unique morphism $\overline{f}\colon R/\mathfrak a\to S$. Now construct the ring $S_{(R/\mathfrak a,M/\mathfrak a)}:= (R/\mathfrak a)\{ \overline{X}\}/I$, where $\overline{X}:=\{\, x_{\overline{m}}\mid \overline{m}\in M/\mathfrak a\,\}$. 
By the universal property of the free $R/\mathfrak a$-ring $(R/\mathfrak a)\{ \overline{X}\}$, there is a unique ring morphism $\widetilde{f}\colon (R/\mathfrak a)\{ \overline{X}\}\to S$ such that $\overline{f}=\widetilde{f}\psi'$ and $\zeta=\widetilde{f}\varepsilon$, where $\psi'\colon R/\mathfrak a\to (R/\mathfrak a)\{ \overline{X}\}$ and $\varepsilon\colon  \overline{X} \to  (R/\mathfrak a)\{ \overline{X}\}$ are the canonical mapping and $\zeta\colon \overline{X}\to S$ is defined by $\zeta(x_{\overline{m}})=(\overline{f}({\overline{m}}))^{-1}$ for every $\overline{m}\in M/\mathfrak a$. See the diagram below. From $\overline{f}=\widetilde{f}\psi'$, we get that $\widetilde{f}(\overline{m})=\overline{f}(\overline{m})=f(m)$, and, from $\zeta=\widetilde{f}\varepsilon$, we have that $\widetilde{f}(x_{\overline{m}})=\widetilde{f}\varepsilon(x_{\overline{m}})=\zeta(\overline{m})=(\overline{f}({\overline{m}}))^{-1}=(f(m))^{-1}$. Thus the
generators $x_{\overline{m}}\overline{m}-1$ and ${\overline{m}}x_{\overline{m}}-1$ of the two-sided ideal $I$ of $(R/\mathfrak a)\{ \overline{X}\}$ are mapped to zero via $\widetilde{f}$, so that $\widetilde{f}$ factors in a unique way through a ring morphism $g\colon S_{(R/\mathfrak a,M/\mathfrak a)}=(R/\mathfrak a)\{ \overline{X}\}/I\to S$, that is, $\widetilde{f}=g\pi'$, where $\pi'\colon (R/\mathfrak a)\{ \overline{X}\}\to(R/\mathfrak a)\{ \overline{X}\}/I=S_{(R/\mathfrak a,M/\mathfrak a)}$ denotes the canonical projection. This, applied to any ring morphisms $\varphi\colon R\to S$ such that $\ker (\varphi)=\mathfrak a$ and $\varphi^{-1}(U(S))=M$,
shows that $S_{(R/\mathfrak a,M/\mathfrak a)} $ is a non-zero ring. Moreover, set $\psi=\chi_{(R/\mathfrak a,M/\mathfrak a)}\pi=\pi'\psi'\pi$ and $f=\overline{f}\pi=\widetilde{f}\psi'\pi=g\pi'\psi'\pi$. Then $f=g\psi$. This proves the existence of $g$ in the last part of the statement of the theorem.

\begin{equation*}
\xymatrix{ R\ar[r]^f \ar[d]_{\pi}& S &  \\
R/\mathfrak a\ar[r]_{\psi'\ \ \ } \ar[ur]^{\overline{f}} &(R/\mathfrak a)\{ \overline{X}\} \ar[u]^{\widetilde{f}}\ar[r]_{\pi'} & S_{(R/\mathfrak a,M/\mathfrak a)} \ar[ul]_g
}
\end{equation*}

Now we apply again the previous results to any ring morphism $\varphi\colon R\to S$.
Since $(\mathfrak a/\mathfrak a,M/\mathfrak a)\in\Hom(R/\mathfrak a)$ by Lemma~\ref{3.2}, we now have that $\chi_{(R/\mathfrak a,M/\mathfrak a)}\colon R/\mathfrak a\to S_{(R/\mathfrak a,M/\mathfrak a)}$ is an injective mapping by Lemma~\ref{3.1}. Thus the kernel $\ker (\psi)$ of $\psi=\chi_{(R/\mathfrak a,M/\mathfrak a)}\pi$ is equal to $\ker (\pi)=\mathfrak a$. 
Also, $$\begin{array}{l}\psi^{-1}(U(S_{(R/\mathfrak a,M/\mathfrak a)}))=(\pi'\psi'\pi)^{-1}(U(S_{(R/\mathfrak a,M/\mathfrak a)}))= \\  \qquad\qquad\qquad\qquad=(\chi_{(R/\mathfrak a,M/\mathfrak a)}\pi)^{-1}(U(S_{(R/\mathfrak a,M/\mathfrak a)}))= \\  \qquad\qquad\qquad\qquad=\pi^{-1}\chi_{(R/\mathfrak a,M/\mathfrak a)}^{-1}(U(S_{(R/\mathfrak a,M/\mathfrak a)}))=\pi^{-1}(M/\mathfrak a)=M.\end{array}$$

It remains to prove the uniqueness of $g$, that is, if $g'\colon S_{(R/\mathfrak a,M/\mathfrak a)}\to S$ is another ring morphism such that $g'\psi=f$, then $g=g'$. Now $S_{(R/\mathfrak a,M/\mathfrak a)}$ is generated, as a ring, by the image of $R$ via $\psi=\pi'\psi'\pi$ and the inverses of the elements of $\psi(M)$. Since $g\psi=g'\psi$, both mappings $g$ and $g'$ send each $\psi(r)$ to $f(r)$ and
each
$\psi(m)^{-1}$ to $f(m)^{-1}$. It follows that $g=g'$, as desired.
\end{proof}

Theorem~\ref{nice} shows that, for any pair $(\mathfrak a,M)$ in $\Hom(R)$, there is a {\em canonical ring} morphism $\psi\colon R\to S_{(R/\mathfrak a,M/\mathfrak a)} $ that realizes that pair. Moreover, the universal property described in the last part of the statement of the theorem shows that the canonical morphism $\psi\colon R\to S_{(R/\mathfrak a,M/\mathfrak a)} $ is one of the least elements in the class $\cHom(R, \mathfrak a)$ of all morphisms $f\colon R\to S$ such that $\mathfrak a\subseteq \ker(f)$ and $M\subseteq f^{-1}(U(S))$ with respect to the preorder $\sigma$, in the sense that $\psi\,\sigma\,f$ for every morphism $f\colon R\to S$ with $\mathfrak a\subseteq \ker(f)$ and $M\subseteq f^{-1}(U(S))$.

\section{Direct limits}

Now let $(R_i)_{i \in I}$ be a direct system of rings indexed on a directed set $(I,\le)$. Hence, for every $i,j \in I$, $i \leq j$, we have compatible connecting ring morphisms $$\mu_{ij}\colon R_i \to R_j.$$ Applying our functor $\Hom(-)$, we get an inverse system $(\Hom(R_i))_{i \in I}$ of partially ordered sets, with connecting partially ordered set morphisms $$\Hom(\mu_{ij})\colon \Hom(R_j)\to \Hom(R_i).$$ 

\begin{theorem}\label{3.4} $$\displaystyle \Hom(\lim_{\longrightarrow} R_i)\cong  \lim_{\longleftarrow} \Hom(R_i).$$\end{theorem}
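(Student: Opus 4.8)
The plan is to construct a bijection between $\Hom(\varinjlim R_i)$ and $\varprojlim \Hom(R_i)$ and check that it respects the partial orders in both directions, so that it is an isomorphism in $\ParOrd$. Write $R:=\varinjlim R_i$ with canonical morphisms $\lambda_i\colon R_i\to R$ satisfying $\lambda_j\mu_{ij}=\lambda_i$. An element of $\varprojlim\Hom(R_i)$ is a compatible family $\big((\mathfrak a_i,M_i)\big)_{i\in I}$ with $\Hom(\mu_{ij})(\mathfrak a_j,M_j)=(\mathfrak a_i,M_i)$, that is, $\mu_{ij}^{-1}(\mathfrak a_j)=\mathfrak a_i$ and $\mu_{ij}^{-1}(M_j)=M_i$ for all $i\le j$. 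I would define the candidate map
\[
\Phi\colon \Hom(R)\to \varprojlim\Hom(R_i),\qquad (\mathfrak a,M)\mapsto \big(\lambda_i^{-1}(\mathfrak a),\lambda_i^{-1}(M)\big)_{i\in I}.
\]
That $\Phi$ lands in the inverse limit is exactly the functoriality relation $\Hom(\mu_{ij})\circ\Hom(\lambda_j)=\Hom(\lambda_i)$, which follows from Proposition~\ref{contravariant} applied to $\lambda_j\mu_{ij}=\lambda_i$, and that each $(\lambda_i^{-1}(\mathfrak a),\lambda_i^{-1}(M))$ is a genuine element of $\Hom(R_i)$ follows because $\Hom(\lambda_i)$ is a well-defined map of partially ordered sets.

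First I would prove injectivity of $\Phi$. If $(\mathfrak a,M)$ and $(\mathfrak a',M')$ in $\Hom(R)$ have the same image, then $\lambda_i^{-1}(\mathfrak a)=\lambda_i^{-1}(\mathfrak a')$ and $\lambda_i^{-1}(M)=\lambda_i^{-1}(M')$ for every $i$. The key point is that the directed-union description $R=\bigcup_i \lambda_i(R_i)$ lets me recover $\mathfrak a$ and $M$ from their pullbacks: every element of $R$ is $\lambda_i(r)$ for some $i$ and $r\in R_i$, and $\lambda_i(r)\in\mathfrak a$ holds iff $r\in\lambda_i^{-1}(\mathfrak a)$ only after passing to a suitable index where the relevant equation already holds in $R_i$. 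I would therefore argue that $\mathfrak a=\bigcup_i\lambda_i(\lambda_i^{-1}(\mathfrak a))$ and $M=\bigcup_i\lambda_i(\lambda_i^{-1}(M))$, using directedness to move two elements into a common $R_i$; from these union formulas the equalities $\mathfrak a=\mathfrak a'$, $M=M'$ are immediate.

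For surjectivity I would take a compatible family $\big((\mathfrak a_i,M_i)\big)_i$ and build a ring morphism out of $R$ realizing it. The cleanest route is to use the universal construction of Section~3: for each $i$ let $\psi_i\colon R_i\to S_{(R_i/\mathfrak a_i,M_i/\mathfrak a_i)}$ be the canonical morphism from Theorem~\ref{nice}, whose associated pair is exactly $(\mathfrak a_i,M_i)$. The compatibility relations $\mu_{ij}^{-1}(\mathfrak a_j)=\mathfrak a_i$ and $\mu_{ij}^{-1}(M_j)=M_i$ mean that $\psi_j\mu_{ij}$ kills $\mathfrak a_i$ and inverts $M_i$, so the universal property of Theorem~\ref{nice} yields canonical connecting morphisms $S_{(R_i/\mathfrak a_i,\dots)}\to S_{(R_j/\mathfrak a_j,\dots)}$ commuting with the $\psi$'s; passing to $\varinjlim$ of these target rings and using the universal property of $R=\varinjlim R_i$ produces a morphism $\varphi\colon R\to \varinjlim_i S_{(R_i/\mathfrak a_i,\dots)}$. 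I would then verify $\lambda_i^{-1}(\ker\varphi)=\mathfrak a_i$ and $\lambda_i^{-1}(\varphi^{-1}(U))=M_i$, again reducing each verification to a finite index via directedness, so that $\Phi([\varphi])$ equals the given family.

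Finally I would check that $\Phi$ and $\Phi^{-1}$ are both order-preserving, which is routine: $(\mathfrak a,M)\le(\mathfrak a',M')$ gives $\lambda_i^{-1}(\mathfrak a)\subseteq\lambda_i^{-1}(\mathfrak a')$ and $\lambda_i^{-1}(M)\subseteq\lambda_i^{-1}(M')$ for all $i$, which is precisely the componentwise order on the inverse limit, and the union formulas give the converse. \textbf{The main obstacle} I expect is the surjectivity step — specifically, assembling the compatible family into a single morphism out of the colimit. Taking componentwise universal rings is clean, but one must ensure that the induced connecting maps among the $S_{(R_i/\mathfrak a_i,M_i/\mathfrak a_i)}$ are coherent enough to form a genuine direct system (so that the colimit exists and the universal property of $R$ applies), and then that the \emph{exact} pairs are recovered rather than merely pairs $\ge(\mathfrak a_i,M_i)$; this is where the compatibility equations $\mu_{ij}^{-1}(\mathfrak a_j)=\mathfrak a_i$ and $\mu_{ij}^{-1}(M_j)=M_i$ must be used in full strength, together with the directed-union structure of $R$.
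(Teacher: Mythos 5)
Your proposal is correct and follows essentially the same route as the paper: the same map given by pulling back $(\mathfrak a,M)$ along the canonical morphisms into the colimit, injectivity via the directed-union description of the colimit (the paper's ``claim'' about $\mu_j^{-1}(X)=\mu_j^{-1}(Y)$ forcing $X=Y$), and surjectivity by assembling the componentwise universal rings of Theorem~\ref{nice} into a direct system via their universal property and passing to the direct limit, with the exact recovery of the pairs checked at a finite index. The only divergence is minor and in your favor: for order-preservation of the inverse map the paper constructs ring morphisms between the universal rings and invokes the preorder $\sigma$, whereas your union-formula argument settles it directly at the level of subsets; both are valid.
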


\begin{proof} Let $\displaystyle
\mu_j\colon R_j\to \lim_{\longrightarrow} R_i $ be the canonical ring morphisms, for every $j\in I$. These morphisms induce partially ordered set morphisms $$ \Hom(\mu_{j})\colon\displaystyle \Hom(\lim_{\longrightarrow} R_i) \to \Hom(R_j).$$ Let $H:=\displaystyle \lim_{\longleftarrow} \Hom(R_i)\subseteq \displaystyle\prod_{j\in I}\Hom(R_j)$
be the inverse limit of the inverse system $(\Hom(R_i))_{i \in I}$ of partially ordered sets, and $h_j\colon H\to \Hom(R_j)$ the canonical mapping. By the universal property of inverse limit, there exists a unique partially order set morphism $\Psi\colon \displaystyle\Hom(\lim_{\longrightarrow} R_i )\to H$ such that $h_j\Psi
=\Hom(\mu_j)$ for every $j\in I$. Thus $\Psi(\mathfrak a,M)=(\mu_{i}^{-1}(\mathfrak a),\mu_{i}^{-1}(M))_{i\in I}$. 

We will show that $\Psi$ is a bijection and $\Psi^{-1}$ is a morphism of partially order sets. 

First we prove that the mapping $\Psi$ is injective. Let $(\mathfrak a,M),(\mathfrak a',M')$ be two elements of $\displaystyle  \Hom(\lim_{\longrightarrow} R_i )$ with $\Psi(\mathfrak a,M)=\Psi(\mathfrak a',M')$. Then $(\mu_{j}^{-1}(\mathfrak a),\mu_{j}^{-1}(M))=(\mu_{j}^{-1}(\mathfrak a'),\mu_{j}^{-1}(M'))$ for every $j\in~I$. 
We claim that, for any two subset $X$ and $Y$ of $\displaystyle\lim_{\longrightarrow} R_i$, if $\mu_{j}^{-1}(X)=\mu_{j}^{-1}(Y)$ for every $j\in I$, then $X=Y$. If we prove the claim, then $(\mu_{j}^{-1}(\mathfrak a),\mu_{j}^{-1}(M))=(\mu_{j}^{-1}(\mathfrak a'),\mu_{j}^{-1}(M'))$ for every $j\in~I$ implies $(\mathfrak a,M)=(\mathfrak a',M')$, which proves that $\Psi$ is injective.

 In order to prove the claim, assume $X,Y\subseteq \displaystyle\lim_{\longrightarrow} R_i$ and $\mu_{j}^{-1}(X)=\mu_{j}^{-1}(Y)$ for all $j\in I$. If $x\in X$, then there exists $i\in J$ and $r_i\in R_i$ with $\mu_i(r_i)=x$. Hence $r_i\in \mu_{i}^{-1}(X)=\mu_{i}^{-1}(Y)$, so that $x=\mu_i(r_i)\in Y$. Thus $X\subseteq Y$. Similarly $Y\subseteq X$. This concludes the proof of the claim, which shows that $\Psi$ is injective.
 
Let us prove that $\Psi$ is surjective. Let $((\mathfrak a_i,M_i))_{i\in I}$ be an element of $H$, so that $(\mu_{ij}^{-1}(\mathfrak a_j), \mu_{ij}^{-1}(M_j))=(\mathfrak a_i,M_i)$ for every $i\le j$ in $I$. Here $(\mathfrak a_i,M_i)$ is an element of $\Hom(R_i)$, so that $(\mathfrak a_i,M_i)$ corresponds to the ring morphism $$\psi_i\colon R_i\to S_{(R_i/\mathfrak a_i,M_i/\mathfrak a_i)}$$ (Theorem~\ref{nice}). We now show that $\left(S_{(R_i/\mathfrak a_i,M_i/\mathfrak a_i)}\right)_{i\in I}$, with suitable canonical connecting maps, form a direct system of rings. Since $\mu_{ij}^{-1}(\mathfrak a_j)=\mathfrak a_i$, the morphisms $\mu_{ij}$ induce monomorphisms $\overline{\mu_{ij}}\colon R_i/\mathfrak a_i\to R_j/\mathfrak a_j$, and $\overline{\mu_{ij}}(M_i/\mathfrak a_i)\subseteq M_j/\mathfrak a_j$. Thus $\overline{\mu_{ij}}$ extends to a ring monomorphism $R_i/\mathfrak a_i\{\,x_{m+\mathfrak a_i}\mid m\in M_i\,\}\to R_j/\mathfrak a_j\{\,x_{m+\mathfrak a_j}\mid m\in M_j\,\}$ that maps $x_{m+\mathfrak a_i}$ to $x_{\mu_{ij}(m)+\mathfrak a_j}$. These canonical ring monomorphisms induce ring morphisms $\nu_{ij}\colon S_{(R_i/\mathfrak a_i,M_i/\mathfrak a_i)}\to S_{(R_j/\mathfrak a_j,M_j/\mathfrak a_j)}$, for all $i\le j$. The diagrams \begin{equation*}
\xymatrix{ R_i\ar[r]^(.3){\psi_i} \ar[d]_{\mu_{ij}}& S_{(R_i/\mathfrak a_i,M_i/\mathfrak a_i)} \ar[d]_{\nu_{ij}}   \\
R_j\ar[r]_(.3){\psi_j} & S_{(R_j/\mathfrak a_j,M_j/\mathfrak a_j)} 
}
\end{equation*} clearly commute for every $i\le j$. Hence we have a morphism of direct systems of rings, and, taking the direct limit, we get a ring morphism $$\displaystyle\psi\colon \lim_{\longrightarrow} R_i\to \lim_{\longrightarrow} S_{(R_i/\mathfrak a_i,M_i/\mathfrak a_i)}.$$ Let $\displaystyle (\mathfrak a,M)\in\Hom(\lim_{\longrightarrow} R_i)$ be the pair corresponding to this ring morphism $\psi$. 

We claim that $\Psi (\mathfrak a,M)=( (\mathfrak a_i,M_i))_{i\in I}$, that is, the $\mu_i^{-1}(\mathfrak a)=\mathfrak a_i$ and $\mu_i^{-1}(M)= M_i$ for each $i\in I$. Let us prove that $\mu_i^{-1}(\mathfrak a)=\mathfrak a_i$. 

An element $r_i\in R_i$ belongs to $\mu_i^{-1}(\mathfrak a)$ if and only if $\mu_i(r_i)\in\mathfrak a=\ker\psi$, that is, if and only if $\psi\mu_i(r_i)=0$. Now we have commutative diagrams \begin{equation*}
\xymatrix{ R_i\ar[r]^(.4){\psi_i} \ar[d]_{\mu_{i}}& S_{(R_i/\mathfrak a_i,M_i/\mathfrak a_i)} \ar[d]_{\nu_{i}}   \\
{\displaystyle\lim_{\longrightarrow} R_i}\ar[r]_(.4){\psi}\ \ \  & {\displaystyle\lim_{\longrightarrow} S_{(R_i/\mathfrak a_i,M_i/\mathfrak a_i),} 
}}
\end{equation*} so that $\psi\mu_i(r_i)=0$ if and only if $\nu_i\psi_i(r_i)=0$, which occurs if and only if there exists $j\ge i$ such that $\nu_{ij}\psi_i(r_i)=0$, that is, $\psi_j\mu_{ij}(r_i)=0$, i.e., if and only if there exists $j\ge i$ such that $\mu_{ij}(r_i)\in\mathfrak a_j$. Equivalently, if and only if $r_i\in\mathfrak a_i$. This proves that $\mu_i^{-1}(\mathfrak a)=\mathfrak a_i$ for every $i$. 

We will now prove that $\mu_i^{-1}(M)=M_i$. Set $\displaystyle S:= \lim_{\longrightarrow} S_{(R_i/\mathfrak a_i,M_i/\mathfrak a_i)}$.
An element $r_i\in R_i$ belongs to $\mu_i^{-1}(M)$ if and only if $\mu_i(r_i)\in M$, that is, if and only if $\psi\mu_i(r_i)\in U(S)$, i.e., if and only if 
$
\nu_i\psi_i(r_i)\in U(S)$. This occurs if and only if there exists $s\in S$ such that $s\nu_i\psi_i(r_i)=1$ and $\nu_i\psi_i(r_i)s=1$. Now any element $s$ of $S$ is of the form $\nu_j(s_j)$ for some $j\ge i$ and $s_j\in S_{(R_j/\mathfrak a_j,M_j/\mathfrak a_j)} $. Also, $\nu_j(s_j)\nu_i\psi_i(r_i)=1$ and $\nu_i\psi_i(r_i)\nu_j(s_j)=1$ in $S$ if and only if there exists $k\ge i,j$ such that $\nu_{jk}(s_j)\nu_{ik}\psi_i(r_i)=1$ and $\nu_{ik}\psi_i(r_i)\nu_{jk}(s_j)=1$ in $S_{(R_k/\mathfrak a_k,M_k/\mathfrak a_k)} $. This occurs if and only if $\nu_{ik}\psi_i(r_i)$ is invertible in $S_{(R_k/\mathfrak a_k,M_k/\mathfrak a_k)} $, that is, if and only if $\psi_k\mu_{ik}(r_i)$  is invertible in $S_{(R_k/\mathfrak a_k,M_k/\mathfrak a_k)} $, i.e., $\mu_{ik}(r_i)\in M_k$, that is, $r_i\in\mu_{ik}^{-1}(M_k)=M_i$. This shows that $\mu_i^{-1}(M)=M_i$, and concludes the proof of the claim. 
Thus $\Psi$ is surjective. 

Finally, let us prove that $\Psi^{-1}$ is a morphism of partially order sets. Let\linebreak $( (\mathfrak a_i,M_i))_{i\in I},\  ( (\mathfrak a'_i,M'_i))_{i\in I}$ be two elements in $H$ with $( (\mathfrak a_i,M_i))_{i\in I}\le ( (\mathfrak a'_i,M'_i))_{i\in I}$, that is, with $\mathfrak a_i\subseteq \mathfrak a'_i$ and $M_i\subseteq M'_i$ for every $i\in I$. We have direct systems of rings $S_{(R_i/\mathfrak a_i,M_i/\mathfrak a_i)}$, $i\in I$, and $S_{(R'_i/\mathfrak a'_i,M'_i/\mathfrak a'_i)}$, $i\in I$, and canonical projections $\pi_i\colon R_i/\mathfrak a_i\to R'_i/\mathfrak a'_i$, which extend to the free $R/\mathfrak a_i$-ring $R/\mathfrak a_i\{X_{M_i/\mathfrak a_i}\}$ (to the free $R/\mathfrak a'_i$-ring $R/\mathfrak a'_i\{X_{M_i'/\mathfrak a'_i}\}$), sending each indeterminate $x_{m+\mathfrak a}\in X_{M_i/\mathfrak a_i}$ to the indeterminate $x_{m+\mathfrak a_i'}\in X_{M_i'/\mathfrak a_i'}$. In this way, we get a canonical ring morphism $R/\mathfrak a_i\{X_{M_i/\mathfrak a_i}\}\to R/\mathfrak a'_i\{X_{M_i'/\mathfrak a_i'}\}$, which induces, factoring out the corresponding ideals $I_i$ and $I'_i$, a ring morphism $S_{(R/\mathfrak a_i,M/\mathfrak a_i)}\to S_{(R/\mathfrak a_i',M'_i/\mathfrak a'_i)}$ and  commutative squares \begin{equation*}
\xymatrix{ R_i\ar@{=}[r] \ar[d]_{\psi_{i}}& R_i \ar[d]_{\psi'_{i}} \\
{\displaystyle S_{(R_i/\mathfrak a_i,M_i/\mathfrak a_i)}}\ar[r]_(.4){\ \ \ \ \overline{\pi_i}} & {\displaystyle S_{(R_i/\mathfrak a'_i,M'_i/\mathfrak a'_i).} 
}}
\end{equation*} Taking the direct limit, we get a commutative triangle \begin{equation*}
\xymatrix{ {}& {\displaystyle\lim_{\longrightarrow} R_i} \ar[dl]_\psi \ar[dr]^{\psi'} & 
\\
S {}\ar[rr]_{\overline{\pi}}&& S'.}
\end{equation*} Thus we have $\psi\,\sigma\,\psi'$ with respect to the preorder $\sigma$ on $\cHom(R)$, so that $(\mathfrak a,M)\le(\mathfrak a',M')$.
 \end {proof}
 
For the last paragraph of this section, we have been inspired by \cite{Vale}. Any preordered set $(X,\le)$ can be viewed as a category whose objects are the elements of $X$ and, for every pair $x,y\in X$ of objects of the category, there is exactly one morphism $x\to y$ if $x\le y$, and no morphism $x\to y$ otherwise. This applies in particular to our partially ordered set $\Hom(R)$, for any ring $R$. There is a covariant functor $F_R\colon \Hom(R)\to\Rng$. It associates to any object $(\mathfrak a,M)$ of $\Hom(R)$ the ring $S_{(R/\mathfrak a,M/\mathfrak a)}$. Like in the proof of the previous theorem, where we show that $\Psi^{-1}$ is a partially ordered set morphism, we have that if $(\mathfrak a,M)\le (\mathfrak a',M')$, then there is a canonical morphism $S_{(R/\mathfrak a,M/\mathfrak a)}\to S_{(R/\mathfrak a',M'/\mathfrak a')}$. 
So we have, for every ring $R$, a covariant functor $F_R\colon \Hom(R)\to\Rng$. This can be expressed by means of diagrams in the category $\Rng$. Formally, a {\em diagram} of shape $J$ in a category $\Cal C$ is a functor $F$ from $J$ to $\Cal C$. Here we are considering only the case in which the category $J$ is a partially ordered set. Thus we have, for every ring $R$, a diagram of shape $\Hom(R)$ in the category $\Rng$.

\section{Maximal elements in $\Hom(R)$}

We now recall a classification due to Bokut (see \cite{Bokut81} and \cite[pp.~515-516]{CohnFree}). Let $\Cal D_0$ be the class of integral domains, $\Cal D_2$ the class of {\em invertible} rings, that is, rings $R$ such that the universal mapping inverting all non-zero elements of $R$ is injective, and $\Cal E$ be the class of rings embeddable in division rings. Then $\Cal D_0\supset \Cal D_2\supset \Cal E$. Notice that  a ring $R\in \Cal D_0$ is in $ \Cal D_2$ if and only if the mapping $\chi_{(R,R\setminus\{0\})}\colon R\to S_{(R,R\setminus\{0\})}$ is injective, if and only if $(0,R\setminus\{0\})\in\Hom(R)$.

\begin{proposition}\label{3.3'} Let $\mathfrak a$ be an ideal of a ring $R$ such that $(\mathfrak a, R\setminus \mathfrak a)\in\Hom(R)$. Then $\mathfrak a$ is a completely prime ideal of $R$, the ring $R/\mathfrak a$ is invertible, and $(\mathfrak a, R\setminus \mathfrak a)\in\Hom(R)$ is a maximal element of $\Hom(R)$.
\end{proposition}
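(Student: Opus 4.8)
The hypothesis gives a ring morphism $\varphi\colon R\to S$ with $\ker\varphi=\mathfrak a$ and $\varphi^{-1}(U(S))=R\setminus\mathfrak a$, so the monoid $M$ associated with $\varphi$ is exactly $M=R\setminus\mathfrak a$. I would prove the three assertions in turn, each resting on the elementary properties of $(\mathfrak a,M)$ from Lemma~\ref{1} together with the universal constructions of Section~3. For complete primeness: by Lemma~\ref{1}(1) the complement $R\setminus\mathfrak a=M$ is multiplicatively closed, and by Lemma~\ref{1}(2) we have $1\in U(R)\subseteq M$, so $1\notin\mathfrak a$ and $\mathfrak a$ is proper. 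Multiplicative closure of $R\setminus\mathfrak a$ is precisely the condition that $xy\in\mathfrak a$ forces $x\in\mathfrak a$ or $y\in\mathfrak a$; hence $\mathfrak a$ is completely prime, equivalently $R/\mathfrak a$ is a domain and so lies in $\Cal D_0$.

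For invertibility of $R/\mathfrak a$, the plan is to descend to the quotient and apply the characterization of invertible rings recalled just before the statement. By Lemma~\ref{3.2}, $(\mathfrak a,R\setminus\mathfrak a)\in\Hom(R)$ yields $(\mathfrak a/\mathfrak a,(R\setminus\mathfrak a)/\mathfrak a)\in\Hom(R/\mathfrak a)$, where $\mathfrak a/\mathfrak a$ is the zero ideal of $R/\mathfrak a$. Since $\mathfrak a\cap M=\emptyset$ by Lemma~\ref{1}(3), the cosets arising from $R\setminus\mathfrak a$ are exactly the nonzero ones, i.e.\ $(R\setminus\mathfrak a)/\mathfrak a=(R/\mathfrak a)\setminus\{0\}$. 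Thus $(0,(R/\mathfrak a)\setminus\{0\})\in\Hom(R/\mathfrak a)$, and Lemma~\ref{3.1} then makes $\chi_{(R/\mathfrak a,(R/\mathfrak a)\setminus\{0\})}$ injective; as $R/\mathfrak a$ is already a domain, this is exactly the statement that $R/\mathfrak a\in\Cal D_2$, i.e.\ is invertible.

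For maximality, I would take any $(\mathfrak b,N)\in\Hom(R)$ with $(\mathfrak a,R\setminus\mathfrak a)\le(\mathfrak b,N)$, so that $\mathfrak a\subseteq\mathfrak b$ and $R\setminus\mathfrak a\subseteq N$, and show the two pairs coincide. As $R$ is the disjoint union of $\mathfrak a$ and $R\setminus\mathfrak a$, these inclusions give $R=\mathfrak b\cup N$. If there were $x\in\mathfrak b\setminus\mathfrak a$, then $x\in R\setminus\mathfrak a\subseteq N$, so $x\in\mathfrak b\cap N$, contradicting $\mathfrak b\cap N=\emptyset$ (Lemma~\ref{1}(3)); hence $\mathfrak b=\mathfrak a$. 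Then $N\cap\mathfrak a=\emptyset$ forces $N\subseteq R\setminus\mathfrak a$, and with the reverse inclusion $N=R\setminus\mathfrak a$. Therefore $(\mathfrak b,N)=(\mathfrak a,R\setminus\mathfrak a)$, which is the maximality claim.

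I do not expect a genuine obstacle: all three parts are bookkeeping built on the disjointness $\mathfrak a\cap M=\emptyset$ and the multiplicative closure of $M$ recorded in Lemma~\ref{1}. The single point needing care is the identification $(R\setminus\mathfrak a)/\mathfrak a=(R/\mathfrak a)\setminus\{0\}$ in the invertibility step, on which the reduction to the $\Cal D_2$-criterion depends; it holds precisely because $M=R\setminus\mathfrak a$ is the \emph{full} complement of $\mathfrak a$ and meets $\mathfrak a$ trivially.
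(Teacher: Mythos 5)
Your proof is correct and follows essentially the same route as the paper's: multiplicative closure of $R\setminus\mathfrak a$ gives complete primeness, disjointness $\mathfrak b\cap N=\emptyset$ gives maximality, and invertibility of $R/\mathfrak a$ comes from factoring an injective morphism through the universal inverting map. The only cosmetic difference is that you package the invertibility step via Lemma~\ref{3.2}, the identification $(R\setminus\mathfrak a)/\mathfrak a=(R/\mathfrak a)\setminus\{0\}$ and Lemma~\ref{3.1}, whereas the paper runs the same factoring argument directly through Theorem~\ref{nice}.
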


\begin{proof} Since $(\mathfrak a ,R\setminus \mathfrak a)\in\Hom(R)$, the set $R\setminus \mathfrak a$ is multiplicatively closed, that is, $\mathfrak a$ is completely prime. Moreover, $(\mathfrak a ,R\setminus \mathfrak a)\in\Hom(R)$ implies that there exists a morphism $\varphi\colon R\to S$ with kernel $\mathfrak a$ for which all elements of $\varphi(R\setminus \mathfrak a)$ are invertible. The induced mapping $\overline{\varphi}\colon R/\mathfrak a\to S$ is an injective morphisms for which the image of every non-zero element of $R/\mathfrak a$ is invertible in $S$. The injective morphism $\overline{\varphi}$ factors through the universal inverting mapping $\psi\colon R/\mathfrak a\to S_{(R/\mathfrak a,M/\mathfrak a)} $ by Theorem~\ref{nice}. Thus $\overline{\varphi}$ injective implies $\psi$ injective, i.e., $R/\mathfrak a$ is an invertible ring. Finally, $(\mathfrak a,R\setminus \mathfrak a)$ is a maximal element of $\Hom(R)$, because if $(\mathfrak a,R\setminus \mathfrak a)\le ({\mathfrak a'}, M')$, then $\mathfrak a\subseteq \mathfrak a'$ and $R\setminus \mathfrak a\subseteq M'$. Hence ${\mathfrak a'}\cap M'=\emptyset$ implies $\mathfrak a'=\mathfrak a$ and $M'=R\setminus \mathfrak a$. \end{proof}

In the following example, we show that not all maximal elements of $\Hom(R)$ are of the form $(\mathfrak a ,R\setminus \mathfrak a)$ for some completely prime ideal $\mathfrak a$.

\begin{example}{\rm Let $R$ be the ring of $n\times n$ matrices with entries in a division ring~$D$, $n>1$. Then any
homomorphism $\varphi\colon R\to S$, $S$ any ring, is injective because $R$ is simple. Every element of $M:=\varphi^{-1}(U(S))$ is regular by Lemma ~\ref{1}. But regular elements in $R$ are invertible. This proves that $\Hom(R)$ has exactly one element, the pair $(0,U(R))$. Thus, clearly, $\Hom(R)$ has a greatest element, which is not of the form $(\mathfrak a ,R\setminus \mathfrak a)$ because $R$ is simple, but not a domain, and $R$ has no completely prime ideals.}\end{example}

\begin{proposition}\label{maximal element} For any commutative ring $R$, the maximal elements of $\Hom(R)$ are the pairs $(P, R\setminus P)$, where $P$ is a prime ideal.\end{proposition}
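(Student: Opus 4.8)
The plan is to prove both inclusions separately: first that every pair $(P,R\setminus P)$ with $P$ prime is a maximal element of $\Hom(R)$, and then, conversely, that every maximal element has exactly this shape.

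For the first inclusion, I would begin by exhibiting $(P,R\setminus P)$ as a genuine element of $\Hom(R)$. Since $R$ is commutative and $P$ is prime, $R/P$ is an integral domain, so I can take $\varphi\colon R\to\operatorname{Frac}(R/P)$ to be the composite of the projection $R\to R/P$ with the embedding into the field of fractions. Then $\ker(\varphi)=P$, and, the codomain being a field, an element of $R$ is sent to a unit precisely when it is sent to a nonzero element, i.e. precisely when it lies outside $P$; hence $\varphi^{-1}(U(\operatorname{Frac}(R/P)))=R\setminus P$. This shows $(P,R\setminus P)\in\Hom(R)$, and Proposition~\ref{3.3'} then immediately yields that this element is maximal (in the commutative setting ``completely prime'' is just ``prime'').

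For the converse, let $(\mathfrak a,M)$ be a maximal element. By Lemma~\ref{1}, the image $\bar M:=M/\mathfrak a$ is a multiplicatively closed subset of the commutative ring $B:=R/\mathfrak a$ with $0\notin\bar M$ (because $\mathfrak a\cap M=\emptyset$). I would then invoke the standard Zorn's-lemma argument: among the ideals of $B$ disjoint from $\bar M$ there is a maximal one $\bar P$, and such a maximal element is prime. Pulling $\bar P$ back along $R\to B$ produces a prime ideal $P$ of $R$ with $\mathfrak a\subseteq P$ and $P\cap M=\emptyset$, that is, $M\subseteq R\setminus P$. Consequently $(\mathfrak a,M)\le(P,R\setminus P)$ in $\Hom(R)$, the right-hand pair being a genuine element of $\Hom(R)$ by the first part. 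Maximality of $(\mathfrak a,M)$ then forces $(\mathfrak a,M)=(P,R\setminus P)$, so $\mathfrak a=P$ is prime and $M=R\setminus\mathfrak a$, as desired.

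The routine verifications (that $\ker(\varphi)=P$ and that units pull back to $R\setminus P$, together with the Zorn argument producing a prime disjoint from $\bar M$) are all standard commutative-algebra facts. The one point that genuinely requires care, and which I regard as the crux, is ensuring that the dominating pair $(P,R\setminus P)$ actually lies in $\Hom(R)$, so that the comparison $(\mathfrak a,M)\le(P,R\setminus P)$ takes place inside the poset and maximality can be applied; this is exactly what the field-of-fractions construction in the first paragraph secures.
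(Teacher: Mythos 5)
Your proof is correct and follows essentially the same route as the paper: the forward direction rests on Proposition~\ref{3.3'} (with the membership of $(P,R\setminus P)$ in $\Hom(R)$ secured by the morphism into the field of fractions of $R/P$, a detail the paper leaves implicit), and the converse is the same Zorn's-lemma argument producing a prime $P\supseteq\mathfrak a$ disjoint from $M$, merely carried out in $R/\mathfrak a$ instead of directly in $R$. No gaps.
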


\begin{proof} By Proposition~\ref{3.3'}, the pairs $(P,R\setminus P)$, where $P$ is any prime ideal of the commutative ring $R$, are maximal elements of $\Hom(R)$.

Conversely, let $({\mathfrak a}, M)\in\Hom(R)$ be a maximal element. The set $\Cal F$ of all ideals $\mathfrak b$ of $R$ with $\mathfrak a\subseteq\mathfrak b$ and $\mathfrak b\cap M=\emptyset$ is non-empty because $\mathfrak a\in \Cal F$. By Zorn's Lemma, the set $\Cal F$, partially ordered by set inclusion, has a maximal element $P$. It is very easy to check that $P$ is a prime ideal of $R$. Then $({\mathfrak a}, M)\le (P,R\setminus P)$. But $({\mathfrak a}, M)$ is maximal, so $({\mathfrak a}, M)= (P,R\setminus P)$.\end{proof}

\begin{proposition}\label{3.8} Let $R$ be a commutative ring. Then $\Hom(R)$ has a greatest element if and only if $R$ has a unique prime ideal. \end{proposition}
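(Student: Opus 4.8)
The plan is to combine two ingredients: the identification in Proposition~\ref{maximal element} of the maximal elements of $\Hom(R)$ with the prime ideals of $R$, and the fact --- implicit in the proof of that proposition --- that these maximal elements are \emph{cofinal} in $\Hom(R)$, in the sense that every pair lies below one of them. The entire subtlety of the statement lies in the distinction between a unique maximal element and a greatest element, so the argument must make the cofinality explicit rather than merely count maximal elements.

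For the forward direction, suppose $\Hom(R)$ has a greatest element $g$. Then $g$ is maximal, and it is the \emph{only} maximal element: any maximal pair $(\mathfrak a,M)$ satisfies $(\mathfrak a,M)\le g$ by the defining property of $g$, whence $(\mathfrak a,M)=g$ by maximality. By Proposition~\ref{maximal element} the maximal elements of $\Hom(R)$ are exactly the pairs $(P,R\setminus P)$ with $P$ a prime ideal, and two such pairs coincide if and only if their first coordinates do. Hence the primes of $R$ are in bijection with the maximal elements of $\Hom(R)$, and uniqueness of the latter forces $R$ to have a unique prime ideal.

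For the converse, suppose $R$ has a unique prime ideal $P$. First I observe that $(P,R\setminus P)\in\Hom(R)$: the quotient $R/P$ is an integral domain, which embeds in its field of fractions $\operatorname{Frac}(R/P)$, and the composite $R\to R/P\hookrightarrow\operatorname{Frac}(R/P)$ realizes precisely the pair $(P,R\setminus P)$. Now let $(\mathfrak a,M)\in\Hom(R)$ be arbitrary and reuse the Zorn's Lemma argument from the proof of Proposition~\ref{maximal element}: the family $\Cal F$ of ideals $\mathfrak b\supseteq\mathfrak a$ with $\mathfrak b\cap M=\emptyset$ admits a maximal member $Q$, which is prime, and by construction $(\mathfrak a,M)\le (Q,R\setminus Q)$. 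Since $P$ is the only prime of $R$, necessarily $Q=P$, so $(\mathfrak a,M)\le (P,R\setminus P)$. As $(\mathfrak a,M)$ was arbitrary, $(P,R\setminus P)$ dominates every element of $\Hom(R)$ and is therefore the greatest element. (Alternatively, one can verify the two inequalities $\mathfrak a\subseteq P$ and $M\cap P=\emptyset$ directly, using that in a ring with a single prime $P$ every element is either nilpotent, hence in $P$ and never sent to a unit, or a unit, hence in $R\setminus P$.)

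The main obstacle is exactly the gap between \emph{unique maximal element} and \emph{greatest element}: in an arbitrary partially ordered set these notions differ, and the converse implication would break down without cofinality of the maximal elements. The point is thus not to produce a new argument but to recognize that the Zorn's Lemma step already carried out in Proposition~\ref{maximal element} supplies this cofinality, so that the unique maximal element $(P,R\setminus P)$ is genuinely a top element.
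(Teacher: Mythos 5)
Your proof is correct. The forward direction coincides with the paper's (a greatest element is the unique maximal element, then invoke Proposition~\ref{maximal element}). For the converse the paper takes a different, more computational route: it fixes an arbitrary morphism $\varphi\colon R\to S$, shows that $\varphi^{-1}(U(S))$ is a saturated multiplicatively closed set, invokes the fact that the complement of such a set is a union of prime ideals, and concludes that the only possibility is $\varphi^{-1}(U(S))=R\setminus P$ (together with $\ker\varphi\subseteq P$ since $R$ is local with maximal ideal $P$). You instead establish cofinality of the prime pairs: every $(\mathfrak a,M)$ sits below some $(Q,R\setminus Q)$ with $Q$ prime, by rerunning the Zorn's Lemma step of Proposition~\ref{maximal element} on an arbitrary (not necessarily maximal) pair --- which is legitimate, since that argument only needs $\mathfrak a\cap M=\emptyset$ (Lemma~\ref{1}(3)) and $M$ multiplicatively closed --- and then uniqueness of the prime forces $Q=P$. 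The two arguments rest on essentially the same commutative-algebra input (the cited Atiyah--Macdonald exercise is proved by exactly your Zorn argument), but yours isolates the conceptually important point, namely that maximal elements of $\Hom(R)$ are cofinal, which is precisely what bridges the gap between ``unique maximal element'' and ``greatest element.'' Your parenthetical alternative (in a ring with a single prime, every element is either nilpotent, hence never inverted, or a unit) is in fact the shortest complete argument of all, as it yields $M=R\setminus P$ and $\mathfrak a\subseteq P$ directly for every morphism. Both approaches are sound; the paper's determines $M$ exactly, while yours generalizes more readily to situations where one only knows the maximal elements.
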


\begin{proof} If $\Hom(R)$ has a greatest element $({\mathfrak a}, M)$, then $({\mathfrak a}, M)$ is the unique maximal element of $\Hom(R)$, so that $R$ has a unique prime ideal by Proposition~\ref{maximal element}. 

Conversely, let $R$ be a commutative ring with a unique prime ideal $P$. Then $R$ is a local ring with maximal ideal $P$. Clearly, the pair $(P,R\setminus P)$ belongs to $\Hom(R)$, because it is associated to the canonical morphism of $R$ onto the field $R/P$. For any other ring morphism $\varphi\colon R\to S$, one has $\ker \varphi\subseteq P$ because $\ker\varphi$ is a proper ideal of $R$. In order to show that $(P,R\setminus P)$ is the greatest element of $\Hom(R)$, it suffices to show that $\varphi^{-1}(U(S))\subseteq R\setminus P$. We claim that $\varphi^{-1}(U(S))$, which is clearly a multiplicatively closed subset of $R$, is saturated, that is, if $r,r'\in R$ and $\varphi(rr')\in U(S)$, then $\varphi(r)\in U(S)$ (this is sufficient, because $R$ is commutative). If $r,r'\in R$ and $\varphi(rr')\in U(S)$, then there exists $s\in S$ such that $\varphi(r)\varphi(r')s=1$. 
 Thus $\varphi(r)$ is invertible in $S$. This proves the claim. The complement of a saturated multiplicatively closed subset of a commutative ring is a  union of prime ideals \cite[p.~44, exercise~7]{Ati}. Since $R$ has a unique prime ideal, the saturated multiplicatively closed subsets of $R$ are only $R\setminus P$, the improper subset $R$ of $R$, and the empty set $\emptyset$. It follows that $\varphi^{-1}(U(S))=R\setminus P$. This concludes the proof. \end{proof}
 
\begin{example} {\rm As an example, we now describe the structure of the partially ordered set $\Hom(\Z)$, where $\Z$ is the ring of integers.

For an arbitrary element  $(\mathfrak a, M)$ of $\Hom(\Z)$, we have that $\mathfrak a=n\Z$ for some non-negative integer $ n\neq1
$. For $n=0$, the set $M$ must be a saturated subset of $\Z$.  Hence $\Z\setminus M$ is a union of prime ideals \cite[p.~44, exercise~7]{Ati}. Thus there exists a subset $P$ of the set $\PP:=\{\,p \mid p\ \mbox{\rm is prime number} \,\}$ such that $M$ is the set $M_P$ of all $z\in \Z$, $z\ne 0$, with $p\nmid z$ for every $p\in P$. For any such subset $P$ of $\PP$, the pair $(0,M_P)$ corresponds to the embedding of $\Z$ into its ring of fractions with denominators in the multiplicatively closed subset $M_P$ of $\Z$.

Now assume that $\mathfrak a=n\Z$ for some $n\ge2$ and that $(\mathfrak a, M)$ corresponds to some ring morphism $\varphi\colon \Z\to S$. Then $\varphi$ induces an injective ring morphism $\overline{\varphi}\colon \Z/n\Z\to S$, and $M/n\Z$ is a multiplicatively closed subset of $\Z/n\Z$ that consists of regular elements and contains $U(\Z/n\Z)$. Since in a finite ring all regular elements are invertible, it follows that $M/n\Z=U(\Z/n\Z)$, so that $M=M_{\divv(n)}$, where $\divv(n):=\{\,p\in\PP\mid p|n \,\}$. Thus $$\Hom(\Z)=\left\{\, (0, M_P) \mid P\ \mbox{is a subset of }\PP\,\right\}\dot{\cup}\left\{\, (n\Z, M_{\divv(n)}) \mid n\in\Z,\ n\geqslant2 \,\right\}.$$

Notice that for any $P,P'\subseteq\PP$ and $n,n'\ge 2$:

\noindent (1) $(0, M_P)\le (0, M_{P'})$ if and only if $M_P\subseteq M_{P'}$, if and only if $P'\subseteq P$.

\noindent (2) $(n\Z, M_{\divv(n)})\le (n'\Z, M_{\divv(n')})$ if and only if $n\Z\subseteq n'\Z$, if and only if $n'|n$ (because $n'|n$ implies $\divv(n')\subseteq \divv(n)$, from which $M_{\divv(n)}\subseteq M_{\divv(n')})$.

\noindent (3) $(0, M_P)\le (n\Z, M_{\divv(n)})$ if and only if $\divv(n)\subseteq P$.

\noindent (4) $(n\Z, M_{\divv(n)})\le(0, M_P)$ never occurs. 

\medskip

In order to better describe the partially ordered set $\Hom(\Z)$, we will now present an order-reversing injective mapping $\rho\colon\Hom(\Z)\to (\overline{\N_0})^{\PP^*}$, where $\N_0$ denotes the set of non-negative integers with its usual order, $\overline{\N_0}:=\N_0\cup\{+\infty\}$ with $n\le\infty$ for all $n\in\N_0$, $\PP^*:=\PP\cup\{0\}$ and the order on the product $(\overline{\N_0})^{\PP^*}$ is the component-wise order. Via this $\rho$, the partially ordered set $\Hom(\Z)$ can be identified as a partially ordered subset of the opposite partially ordered set of $(\overline{\N_0})^{\PP^*}$. Notice that every positive integer $n$ can be written uniquely as a product of primes, $n=p_1^{e_1}\dots p_t^{n_t}$ for suitable distinct primes $p_i\in\PP$ and positive integers $e_i$, so that there is an order-preserving injective mapping $\rho'\colon\N\to \N_0^{\PP}$, where the set $\N$ of positive integers is ordered by the relation $|$ (divides). Here $$\rho'(n)(p)=\left\{\begin{array}{ll}e_i\ & \mbox{\rm if}\ p=p_i\ \mbox{\rm for some }i ,\\ 0 & \mbox{\rm if}\ p\in\PP\setminus\{p_1,\dots,p_t\} \end{array}\right.$$ for every $n\in\N$ and $p\in\PP$. Similarly, there are characteristic functions of subsets $P$ of $\PP$, so that there is an order-preserving bijection $\chi\colon\mathscr{P}(\PP)\to \{0,+\infty\}^{\PP}$, defined  by $\chi(P)=\chi_P$ for every $P$ in the power set $\mathscr{P}(\PP)$ of all subsets of $\PP$, where $\chi_P\colon \PP\to \{0,+\infty\}$ is such that $$\chi_P(p)=\left\{\begin{array}{ll}+\infty & \mbox{\rm if}\ p\in P, \\ 0 & \mbox{\rm if}\ p\in \PP\setminus P \end{array}\right.$$ for every $P\subseteq \PP$ and $p\in\PP$. 

Our mapping $\rho$ will extend both the order-preserving injective mapping $\rho'$ and the order isomorphism $\chi$. Define $\rho\colon\Hom(\Z)\to (\overline{\N_0})^{\PP^*}$ by
$$\rho(n\Z, M_{\divv(n)})(p)=\left\{\begin{array}{ll}e_i\ & \mbox{\rm if}\ p=p_i \ \mbox{\rm for some }i ,\\ 0 & \mbox{\rm if}\ p\in\PP\setminus\{p_1,\dots,p_t\}, \\ 0& \mbox{\rm if}\ p=0 \end{array}\right.$$ for every $n\ge2$ and $p\in\PP^*:=\PP\cup\{0\}$, and $$\rho(0, M_P)(p)=\left\{\begin{array}{ll}+\infty & \mbox{\rm if}\ p\in P, \\ 0 & \mbox{\rm if}\ p\in \PP\setminus P, \\ 1& \mbox{\rm if}\ p=0 \end{array}\right.$$ for every $P\subseteq\PP$. 

In order to show that this mapping $\rho$ is an order-reversing embedding of $\Hom(\Z)$ into $(\overline{\N_0})^{\PP^*}$, we must prove that $\rho$ satisfies the following four properties, corresponding to the four properties (1)-(4) above:

(1$'$) $\rho(0, M_{P'})(p)\le\rho(0, M_P)(p)$ for every $p\in\PP^*$ if and only if $P'\subseteq P$.

(2$'$) $\rho(n'\Z, M_{\divv(n')})(p)\le \rho(n\Z, M_{\divv(n)})(p)$ for every $p\in\PP^*$ if and only if $n'|n$.

(3$'$) $\rho(n\Z, M_{\divv(n)})(p)\le \rho(0, M_P)(p)$ for every $p\in\PP^*$ if and only if $\divv(n)\subseteq P$.

(4$'$) For every $P\subseteq\PP$ and every $n\ge 2$, there exists $p\in \PP^*$ such that  $$\rho(n\Z, M_{\divv(n)})(p)<  \rho(0, M_P)(p).$$

Let $P,P'$ be subsets of $\PP$. In order to prove (1$'$), notice that $\rho(0, M_{P'})(p)\le\rho(0, M_P)(p)$ for every $p\in\PP^*$ if and only if, for every $p\in\PP$, $\rho(0, M_{P'})(p)=+\infty$ implies $\rho(0, M_{P})(p)=+\infty$. This is equivalent to $p\in P'$ implies $p\in P$ for every $p\in\PP$, that is, if and only if $P'\subseteq P$.

Now let $n,n'\ge2$ be integers. Then $\rho(n'\Z, M_{\divv(n')})(p)\le \rho(n\Z, M_{\divv(n)})(p)$ for every $p\in\PP^*$ if and only if, for every prime $p$, $p|n'$ implies $p|n$ and the exponent of $p$ in a prime factorization of $n'$ is less than or equal to the exponent of $p$ in a prime factorization of $n$. This is equivalent to $n'|n$, which proves (2$'$).

For (3$'$), let $n=p_1^{e_1}\dots p_t^{e_t}$ be a prime factorization of the integer $n\ge 2$. Then $\rho(n\Z, M_{\divv(n)})(p)\le \rho(0, M_P)(p)$ for every $p\in\PP^*$ if and only if $\rho(n\Z, M_{\divv(n)})(p)\le \rho(0, M_P)(p)$ for every $p\in\PP$, if and only if $\rho(n\Z, M_{\divv(n)})(p_i)\le \rho(0, M_P)(p_i)$ for every $i=1,2,\dots,t$, if and only if $e_i\le \rho(0, M_P)(p_i)$ for every $i=1,2,\dots,t$. That is, if and only if $\divv(n)\subseteq P$.

Finally, (4$'$) is trivial, because it suffices to take $p=0$. 

It easily follows that: (1) $\Hom(\Z)$ has $(0,M_{\PP})=(0,\{1,-1\})=(0,U(\Z))$ as its least element, and (2) the maximal elements of $\Hom(\Z)$ are the pairs $(0,M_{\emptyset})=(0,\Z\setminus\{0\})$ and the pairs $(p\Z,M_{\divv(p)})=(p\Z,\Z\setminus p\Z)$ for every $p\in\PP$ (cf.~Proposition~\ref{maximal element}).}\end{example}

Propositions \ref{3.3'} and \ref{maximal element} show that the set $\Max(R)$ of all maximal elements of $\Hom(R)$ could be used as a good substitute for the spectrum of a non-commutative ring $R$. Let us show that the set of all maximal elements is never empty.

\begin{theorem} \label{non-empty} For every ring $R$, the partially ordered set $\Hom(R)$ has maximal elements.\end{theorem}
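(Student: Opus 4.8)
The plan is to apply Zorn's Lemma to the partially ordered set $\Hom(R)$. First I would observe that $\Hom(R)$ is non-empty, since it always contains the least element $(0,U(R))$ corresponding to the identity morphism $R\to R$. Hence it suffices to show that every chain in $\Hom(R)$ admits an upper bound, and a maximal element then exists by Zorn's Lemma.

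So let $\{(\mathfrak a_i,M_i)\}_{i\in I}$ be a chain in $\Hom(R)$, indexed by a totally ordered (hence directed) set $(I,\le)$, so that $\mathfrak a_i\subseteq\mathfrak a_j$ and $M_i\subseteq M_j$ whenever $i\le j$. The natural candidate for an upper bound is the pair $\bigl(\bigcup_i\mathfrak a_i,\ \bigcup_i M_i\bigr)$, and the whole point is to show that this pair genuinely lies in $\Hom(R)$, i.e.\ that it is realized by a single ring morphism out of $R$. To produce such a morphism I would reuse the direct-limit machinery. By Theorem~\ref{nice} each $(\mathfrak a_i,M_i)$ is realized by the canonical morphism $\psi_i\colon R\to S_{(R/\mathfrak a_i,M_i/\mathfrak a_i)}$, and, since $(\mathfrak a_i,M_i)\le(\mathfrak a_j,M_j)$, the covariant functor $F_R\colon\Hom(R)\to\Rng$ supplies canonical connecting ring morphisms $\nu_{ij}\colon S_{(R/\mathfrak a_i,M_i/\mathfrak a_i)}\to S_{(R/\mathfrak a_j,M_j/\mathfrak a_j)}$ satisfying $\nu_{ij}\psi_i=\psi_j$. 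This makes $\bigl(S_{(R/\mathfrak a_i,M_i/\mathfrak a_i)}\bigr)_{i\in I}$ a direct system of rings.

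Next I would pass to the direct limit $S:=\lim_{\longrightarrow} S_{(R/\mathfrak a_i,M_i/\mathfrak a_i)}$, with structural maps $\nu_i\colon S_{(R/\mathfrak a_i,M_i/\mathfrak a_i)}\to S$, and define $\psi\colon R\to S$ by $\psi:=\nu_i\psi_i$; this is independent of $i$, because for $i\le j$ one has $\nu_j\psi_j=\nu_j\nu_{ij}\psi_i=\nu_i\psi_i$. Let $(\mathfrak a,M)\in\Hom(R)$ be the pair associated to $\psi$. It then remains to identify $\mathfrak a=\ker\psi$ and $M=\psi^{-1}(U(S))$ with the two unions, and this is exactly the computation carried out in the surjectivity part of the proof of Theorem~\ref{3.4}: an element $r\in R$ satisfies $\psi(r)=0$ if and only if $\nu_i\psi_i(r)=0$, if and only if there is $j\ge i$ with $\psi_j(r)=0$, i.e.\ $r\in\mathfrak a_j$; and $\psi(r)\in U(S)$ if and only if $\nu_i\psi_i(r)$ is invertible in the limit, which, writing a putative inverse as $\nu_j(s_j)$ and passing to a common index $k$, happens if and only if $\psi_k(r)$ is invertible in $S_{(R/\mathfrak a_k,M_k/\mathfrak a_k)}$ for some $k$, i.e.\ $r\in M_k$. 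Thus $\mathfrak a=\bigcup_i\mathfrak a_i$ and $M=\bigcup_i M_i$, so $(\mathfrak a,M)\in\Hom(R)$ is an upper bound for the chain.

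The main obstacle is precisely this last identification: proving that the union pair is genuinely realized by a morphism, rather than being a merely formal supremum of kernels and unit-preimages. Everything else — non-emptiness, the mechanics of Zorn's Lemma, and the verification that the $\nu_{ij}$ form an honest direct system — is routine once the functor $F_R$ and the direct-limit computation of Theorem~\ref{3.4} are invoked. With the upper bound in hand, Zorn's Lemma yields a maximal element of $\Hom(R)$, which completes the proof.
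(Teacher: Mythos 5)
Your proof is correct, but it is organized differently from the paper's. You apply Zorn's Lemma once, directly to the whole poset $\Hom(R)$, and the burden is to show that an arbitrary chain $\{(\mathfrak a_i,M_i)\}_i$ has the union pair $(\bigcup_i\mathfrak a_i,\bigcup_i M_i)$ as a genuine element of $\Hom(R)$; you realize it as the pair of the morphism into $\varinjlim S_{(R/\mathfrak a_i,M_i/\mathfrak a_i)}$, using the connecting maps supplied by the functor $F_R$ (which requires the maps between universal rings with \emph{different} kernels $\mathfrak a_i\subseteq\mathfrak a_j$, exactly as in the surjectivity half of Theorem~\ref{3.4}). The paper instead proceeds in two stages: it first fixes a maximal two-sided ideal $\mathfrak m$ of $R$, then applies Zorn's Lemma only to the family $\mathcal F$ of monoids $M=\varphi^{-1}(U(S))$ arising from morphisms with kernel exactly $\mathfrak m$; there the direct limit is taken over rings $S_{(R/\mathfrak m,M_\lambda/\mathfrak m)}$ with a common kernel, the maximality of $\mathfrak m$ forces the kernel of the limit morphism to stay equal to $\mathfrak m$, and the resulting maximal $M\in\mathcal F$ gives a maximal pair $(\mathfrak m,M)$ because no proper ideal can strictly contain $\mathfrak m$. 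Your route is the more direct one and in fact yields slightly more, namely that every chain in $\Hom(R)$ has a least upper bound, so every element of $\Hom(R)$ lies below a maximal one; the paper's route yields the complementary extra information that maximal elements of $\Hom(R)$ can be found above any prescribed maximal two-sided ideal. One small point worth making explicit in your version (it is equally implicit in the paper's): the direct limit $S$ is a non-zero ring, since $1=0$ in $\varinjlim S_i$ would force $1=0$ in some $S_i$, contradicting Theorem~\ref{nice}; this is needed for $\psi$ to be a morphism in $\Rng$ and hence for $(\mathfrak a,M)$ to lie in $\Hom(R)$.
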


\begin{proof} Let $R$ be a ring. It is known that $R$ always has maximal two-sided ideals, that is, maximal elements in the set of all proper two-sided ideals (this is a very standard application of Zorn's Lemma). Let $\mathfrak m$ be a maximal two-sided ideal of $R$. Set $\Cal F:=\{\, M \mid  M=\varphi^{-1}(U(S))$, $S$  is any ring and $\varphi\colon R\to S$ is a ring morphism with $\ker(\varphi)=\mathfrak m \,\}$. Then $\Cal F$ is non-empty (consider the canonical projection $\varphi\colon R\to R/\mathfrak m$). Partially order $\Cal F$ by set inclusion. Let $M_\lambda$ ($\lambda\in\Lambda$) be a chain in $\Cal F$. By Theorem~\ref{nice}, $M_\lambda=\psi_\lambda^{-1}(U(S_{(R/\mathfrak m,M_\lambda/\mathfrak m)}))$, where $\psi_\lambda\colon R\to S_{(R/\mathfrak m,M_\lambda/\mathfrak m)} $ is the canonical mapping. Since the monoids $M_\lambda$ ($\lambda\in\Lambda$) are linearly ordered by set inclusion, the rings $S_{(R/\mathfrak m,M_\lambda/\mathfrak m)}$ form a direct system of rings over a linearly ordered set, and there is a ring morphism $$\psi=\displaystyle \lim_{\longrightarrow} \psi_\lambda\colon R\to S=\lim_{\longrightarrow} S_{(R/\mathfrak m,M_\lambda/\mathfrak m)}.$$ The elements of $M_\lambda$ are mapped to invertible elements of $S_\lambda$ via $\psi_\lambda$, so that they are mapped to invertible elements of $S$ via $\psi$. Thus $\psi^{-1}(U(S))\supseteq\bigcup_{\lambda\in\Lambda} M_\lambda$, i.e., $\psi^{-1}(U(S))\in\Cal F$ is an upper bound of the chain of the monoids $M_\lambda$. Hence we can apply Zorn's Lemma, which concludes the proof of the theorem.\end{proof}

From Proposition~\ref{contravariant}, Theorem~\ref{non-empty} and \cite[Theorem~1.1]{Reyes}, we get that:

\begin{theorem} \label{no contravariant} There is no contravariant functor from the category of rings to the category of sets that assigns to each ring $R$ the set of all maximal elements of $\Hom(R)$.\end{theorem}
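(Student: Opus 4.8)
The plan is to argue by contradiction, reducing the statement to Reyes' obstruction theorem. Suppose there were a contravariant functor $G\colon\Rng\to\Set$ with $G(R)=\Max(R)$ for every ring $R$. The strategy is to show that the restriction of $G$ to the full subcategory $\mathrm{cRing}$ of commutative rings is naturally isomorphic to the prime spectrum functor $\Spec$, and then to invoke \cite[Theorem~1.1]{Reyes}, which asserts that no such extension of $\Spec$ to all of $\Rng$ can take a non-empty value on the matrix rings $\M_n(\C)$ with $n\ge3$. Since Theorem~\ref{non-empty} guarantees that $G(R)=\Max(R)$ is never empty, this will yield the desired contradiction.

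First I would record the object-level identification. By Proposition~\ref{maximal element}, for a commutative ring $R$ the maximal elements of $\Hom(R)$ are exactly the pairs $(P,R\setminus P)$ with $P$ a prime ideal, so $(P,R\setminus P)\mapsto P$ is a bijection $\Max(R)\to\Spec(R)$. The crux, and the step I expect to be the main obstacle, is to promote this family of bijections to a natural isomorphism $\Spec\cong G|_{\mathrm{cRing}}$: a priori $G$ is only prescribed on objects, and its action on morphisms is unconstrained, so matching it with $\Spec(f)$ requires an argument.

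To control the morphisms I would use the residue-field maps. For $P\in\Spec(R)$ let $q_P\colon R\to\kappa(P)$ be the composite of the quotient $R\to R/P$ with the embedding $R/P\hookrightarrow\operatorname{Frac}(R/P)=\kappa(P)$; then $\ker q_P=P$ and $q_P^{-1}(U(\kappa(P)))=R\setminus P$. Since $\kappa(P)$ is a field, $G(\kappa(P))=\Max(\kappa(P))$ is a single point $\star$, and I set $\eta_R(P):=G(q_P)(\star)$. Given a morphism $f\colon R\to R'$ of commutative rings and $P'\in\Spec(R')$ with $P:=f^{-1}(P')$, the induced embedding $\overline{f}\colon\kappa(P)\hookrightarrow\kappa(P')$ satisfies $\overline{f}\,q_P=q_{P'}f$; applying the contravariant $G$ and using that $G(\kappa(P)),G(\kappa(P'))$ are singletons gives $\eta_R(\Spec(f)(P'))=G(f)(\eta_{R'}(P'))$, so $\eta$ is a natural transformation $\Spec\Rightarrow G|_{\mathrm{cRing}}$. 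It then remains to check that each $\eta_R$ is bijective, which I would do by comparing $\eta$ against the canonical bijection of Proposition~\ref{maximal element} and testing along residue fields and along the finite products $\kappa(P)\times\kappa(Q)$, whose spectra and $\Max$-sets are explicit two-point sets; this pins down $\eta_R$ as the Proposition~\ref{maximal element} bijection and hence as a natural isomorphism.

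With $G|_{\mathrm{cRing}}\cong\Spec$ established, \cite[Theorem~1.1]{Reyes} applies and forces $G(\M_n(\C))=\emptyset$ for every $n\ge3$: this is the Kochen--Specker-type obstruction, since a point of $G(\M_n(\C))$ would, by functoriality along the inclusions of the commutative subalgebras $\C^n$ spanned by complete systems of orthogonal idempotents, amount to a consistent choice of one idempotent in each such system, which is impossible for $n\ge3$. On the other hand, Theorem~\ref{non-empty} asserts that $\Hom$ of any ring has maximal elements, so $G(\M_n(\C))=\Max(\M_n(\C))\ne\emptyset$; indeed, since $\M_n(\C)$ is simple and its regular elements are units, $\Hom(\M_n(\C))=\{(0,U(\M_n(\C)))\}$ is a single point. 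This contradiction shows that no functor $G$ as above exists, proving the theorem.
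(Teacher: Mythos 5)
Your overall strategy is exactly the paper's: its entire proof of Theorem~\ref{no contravariant} is the one-line combination of Proposition~\ref{contravariant}, Theorem~\ref{non-empty} and Reyes' Theorem~1.1, and your Kochen--Specker gloss together with the non-emptiness of $\Max(\M_n(\C))$ are the intended ingredients. You also correctly isolate the delicate point that the paper passes over in silence: a functor $G$ prescribed only on objects must still be shown to restrict, up to \emph{natural} isomorphism, to $\Spec$ on commutative rings before Reyes' theorem can be invoked. Your residue-field construction of the natural transformation $\eta\colon\Spec\Rightarrow G|_{\mathrm{cRing}}$ is correct as far as it goes.

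The gap is the final step, where you assert that each $\eta_R$ is bijective and can be ``pinned down'' as the bijection of Proposition~\ref{maximal element}. This cannot be done: if such a $G$ existed, $\eta$ would provably fail to be injective. Take $R=\C\times\C$, let $\tau$ be the coordinate swap, $\pi_1,\pi_2\colon R\to\C$ the projections, $j\colon R\to\M_2(\C)$ the diagonal embedding and $c_P$ conjugation by the permutation matrix, so that $j\circ\tau=c_P\circ j$. Since $G(\M_2(\C))=\Max(\M_2(\C))$ is a singleton (the paper's example on matrix rings over division rings), $G(c_P)$ is the identity, and contravariant functoriality gives $G(\tau)\circ G(j)=G(j)\circ G(c_P)=G(j)$; hence the involution $G(\tau)$ of the two-element set $G(\C\times\C)$ has a fixed point and must be the identity. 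Then $G(\pi_2)=G(\pi_1\circ\tau)=G(\tau)\circ G(\pi_1)=G(\pi_1)$, so $\eta_R(P_1)=\eta_R(P_2)$. Worse, since $\Spec(\tau)$ is the swap while $G(\tau)=\mathrm{id}$, naturality shows that \emph{no} isomorphism $\Spec\cong G|_{\mathrm{cRing}}$ can exist, so Reyes' Theorem~1.1 is simply not applicable to $G$ in the form you (and, read literally, the paper) invoke it. The reading under which the argument does close is the one suggested by the paper's citation of Proposition~\ref{contravariant}: take the morphism action to be the one induced by $\Hom(-)$, i.e., ask whether each $\Hom(f)$ restricts to $\Max$; for that action the restriction to commutative rings is literally $\Spec$, because $\Hom(f)(P',R'\setminus P')=(f^{-1}(P'),R\setminus f^{-1}(P'))$, so Reyes applies and Theorem~\ref{non-empty} gives the contradiction. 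Proving the fully literal statement about an arbitrary functor with this object assignment would require a genuinely different argument that controls the morphism action directly.
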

 
It would be very interesting to determine, for any ring $R$, if the set $\Hom(R,0)$ has a greatest element. This would correspond with what has been done by Bavula for left localizations at left Ore sets in \cite[Theorem~2.1.2]{VB3}. The greatest element of $\Hom(R,0)$ corresponds to a suitable submonoid $M$ of the multiplicative monoid $\Reg_R$ of all regular elements of $R$. Notice that:
\begin{enumerate} \item
When $R$ is commutative, the greatest element of $\Hom(R,0)$ is clearly $(0,\Reg_R)$.
\item More generally, if $\Reg_R$ is a right Ore set or a left Ore set, then the greatest element of $\Hom(R,0)$ is $(0,\Reg_R)$.
\item If the ring $R$ is contained in a division ring, the greatest element of\linebreak  $\Hom(R,0)$ is $(0,R\setminus\{0\})$.
\item More generally, suppose that the canonical morphism $\chi_{(R,\Reg_R)}\colon R\to S_{(R,\Reg_R)}$ is injective, or, equivalently,  $R$ is 
contained in a ring in which all regular elements of $R$ are invertible. For example,
$R$ could be an invertible ring (see the definition in the first paragraph of this Section). Then the greatest element of  $\Hom(R,0)$ is $(0,\Reg_R)$.
\item If $M$ is a submonoid of $R$, the pair $(0,M)$ is a maximal element of $\Hom(R,0)$ if and only if the canonical morphism $\chi_{(R,M)}\colon R\to S_{(R,M)}$ is injective and, for every regular element $x\in R$, $x\notin M$, the canonical mapping $\chi_{(R,M\cup\{x\})}\colon R\to S_{(R,M\cup\{x\})}$ is not injective. \end{enumerate}

\begin{remark}\label{vvv} {\rm For any ring $R$, consider the three sets 
$$\begin{array}{l} \Div(R):=\{\,(\mathfrak a ,R\setminus \mathfrak a )\mid \mathfrak a =\ker \varphi\ \mbox{\rm for a morphism } 
\\  \qquad\qquad\qquad\qquad\qquad\qquad \varphi\colon R\to D\ \mbox{\rm into some division ring }D\,\}\\
\Cpr(R):=\{\,(P,R\setminus P)\in\Hom(R)\mid P\ \mbox{\rm is a completely prime ideal of }R\,\}\\
\Max(R)\ \mbox{\rm of all maximal elements of}\ \Hom(R).\end{array}$$
In general, we have $\Div(R)\subseteq \Cpr(R)\subseteq\Max(R)\subseteq\Hom(R)$ (Proposition~\ref{3.3'}), and all these inclusions can be proper. When $R$ is commutative, $\Div(R)= \Cpr(R)=\Max(R)\cong\Spec(R)\subseteq \Hom(R)$ (Proposition~\ref{maximal element}).

Related to this, we can consider Cohn's spectrum $\mathbf{X}(R)$ of the ring $R$, that is, the topological space ${\mathbf X}(R)$ of all epic $R$-fields, up to isomorphism. Recall that  a ring morphism $ f \colon R \to D$ 
 is an {\em epic $R$-field} in the sense of \cite[p.~154]{Cohn} if $D$ is a division ring and there is no division ring different from $D$ between $f(R)$ and~$D$.
Notice that there are rings $R$ for which there is no epic $R$-field $R\to D$. For instance, if $R$ is a ring that is not IBN,  there is no ring morphism $R\to D$, for any division ring $D$.
Clearly, there is an onto mapping $\mathbf{X}(R)\to \Div(R)$.
}\end{remark}

\section{The partially ordered set $\overline{\Hom(R)}$}

As we have said in Section~\ref{2}, the partially ordered set $\Hom(R)$ is a meet-semilattice, hence a commutative semigroup in which every element is idempotent, has a smallest element $(0,U(R))$, but does not have a greatest element in general. Hence we now enlarge the partially ordered set $\Hom(R)$ adjoining to it a further element, a new greatest element $1$, setting $\overline{\Hom(R)}:=\Hom(R)\,\dot{\cup}\,\{1\}$. 
Here $(\mathfrak a,M)\le 1$ for every element $(\mathfrak a,M)\in\Hom(R)$.  This new element $1$ of $\overline{\Hom(R)}$ represents in some sense the zero morphism $R\to 0$, where $0$ is the zero ring with one element. The zero ring is not a ring in our sense strictly, because we have supposed in the Introduction that all our rings have an identity $1\ne 0$. This is the reason why the zero morphism $R\to 0$ does not appear in the definition of $\Hom(R)$. Moreover, the pair $(\mathfrak a, M)$ corresponding to the zero morphism $R\to 0$ would clearly have $\mathfrak a=R$, but it is not clear what $M$ should be.

\medskip

The contravariant functor $\Hom(-)\colon\Rng\to\ParOrd$ extends to a contravariant functor $\overline{\Hom(-)}\colon\Rng\to\ParOrd$ simply extending, for every ring morphism $\varphi\colon R\to S$, the mapping $\Hom(\varphi)\colon\Hom(S)\to\Hom(R)$ to the mapping $\overline{\Hom(\varphi)}\colon\overline{\Hom(S)}\to\overline{\Hom(R)}$, where $\overline{\Hom(\varphi)}(1)=1$.

First of all, we will now show that $\overline{\Hom(R_1\times R_2)}$, where $R_1\times R_2$ denotes the ring direct product, is canonically isomorphic to the cartesian product $\overline{\Hom(R_1)}\times\overline{\Hom( R_2)}$. We first need an elementary proposition. For every pair $R,S$ of rings, we will denote the set of all ring morphisms $R\to S$, including the zero morphism $R\to 0$, by $\overline{\Hom_{\Rng}(R,S)}$.

\begin{proposition}\label{2.5} Let $R_1,R_2,S$ be rings. Then there is a bijection between $$\overline{\Hom_{\Rng}(R_1\times R_2,S)}$$ and the set of all triples $(e,\psi_1,\psi_2)$, where $e\in S$ is an idempotent element and $\psi_1\colon R_1\to eSe$, $\psi_2\colon R_2\to (1-e)S(1-e)$ are ring morphisms (possibly zero, when $e=0$ or $e=1$).\end{proposition}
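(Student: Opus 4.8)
The plan is to establish the bijection by exploiting the standard decomposition of $R_1\times R_2$ via its central idempotents, which transfers to $S$ through any ring morphism. Write $\pi_1\colon R_1\times R_2\to R_1$ and $\pi_2\colon R_1\times R_2\to R_2$ for the canonical projections, and let $u_1=(1,0)$ and $u_2=(0,1)$ be the two central orthogonal idempotents of $R_1\times R_2$ summing to the identity. The key observation is that, given any ring morphism $f\colon R_1\times R_2\to S$ (including the zero morphism $R_1\times R_2\to 0$), the element $e:=f(u_1)$ is an idempotent of $S$, with $1-e=f(u_2)$. First I would verify that $e$ need not be central in $S$, but that $f$ nonetheless decomposes through the Peirce corners $eSe$ and $(1-e)S(1-e)$.

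The main construction, in the forward direction, is as follows. Starting from $f$, set $e=f(u_1)$, and define $\psi_1\colon R_1\to eSe$ by $\psi_1(r_1)=f(r_1,0)$ and $\psi_2\colon R_2\to (1-e)S(1-e)$ by $\psi_2(r_2)=f(0,r_2)$. I would check that these land in the claimed corners: since $(r_1,0)=u_1(r_1,0)u_1$ in $R_1\times R_2$, applying $f$ gives $f(r_1,0)=e\,f(r_1,0)\,e\in eSe$, and symmetrically for $\psi_2$. That $\psi_1$ is a ring morphism with $\psi_1(1_{R_1})=f(u_1)=e=1_{eSe}$ follows from multiplicativity of $f$ and the fact that $e$ is the identity of the corner ring $eSe$; likewise for $\psi_2$. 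In the reverse direction, given a triple $(e,\psi_1,\psi_2)$ with $\psi_1\colon R_1\to eSe$ and $\psi_2\colon R_2\to(1-e)S(1-e)$, I would define $f\colon R_1\times R_2\to S$ by $f(r_1,r_2)=\psi_1(r_1)+\psi_2(r_2)$. The crucial point to verify is multiplicativity: since the images of $\psi_1$ and $\psi_2$ lie in $eSe$ and $(1-e)S(1-e)$ respectively, the orthogonality relations $e(1-e)=(1-e)e=0$ force all cross terms to vanish, so $f((r_1,r_2)(r_1',r_2'))=\psi_1(r_1r_1')+\psi_2(r_2r_2')=f(r_1,r_2)f(r_1',r_2')$. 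Additivity and the unit condition $f(1,1)=e+(1-e)=1$ are immediate.

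Finally I would confirm that these two assignments are mutually inverse. Starting from $f$, extracting $(e,\psi_1,\psi_2)$, and rebuilding $f'$, we recover $f'(r_1,r_2)=f(r_1,0)+f(0,r_2)=f(r_1,r_2)$ using additivity of $f$ and $(r_1,r_2)=(r_1,0)+(0,r_2)$; conversely, starting from a triple and reading off the idempotent $f(u_1)=\psi_1(1_{R_1})=e$ and the restrictions recovers the original triple. The degenerate cases are handled by the same formulas: $e=0$ forces $\psi_1$ to be the zero morphism $R_1\to 0$ and identifies $f$ with the composite $R_1\times R_2\to R_2\to S$, while $e=1$ is symmetric, and $f$ itself is the zero morphism $R_1\times R_2\to 0$ precisely when $S=0$.

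The step I expect to be the main obstacle is the careful bookkeeping around the unit conditions and the non-centrality of $e$: one must resist the temptation to treat $eSe$ as an ideal or to assume $e$ central, and instead verify everything purely through the orthogonality $e(1-e)=(1-e)e=0$ together with the idempotent identities $e\cdot e=e$. The Peirce-corner formalism makes the cross-term cancellation transparent, so once that is set up correctly the verifications are routine, and the treatment of the zero morphism must be threaded consistently through each case so that the bijection genuinely includes the degenerate triples with $e\in\{0,1\}$.
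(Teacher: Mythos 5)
Your proof is correct and takes essentially the same approach as the paper's: both assign to $f$ the triple $\bigl(f(1_{R_1},0_{R_2}),\, f|_{R_1},\, f|_{R_2}\bigr)$ and recover $f$ from a triple via $f(r_1,r_2)=\psi_1(r_1)+\psi_2(r_2)$, using the orthogonal idempotents $(1,0)$ and $(0,1)$ and the Peirce corners of $S$. You simply write out the well-definedness, surjectivity, and cross-term cancellation checks that the paper explicitly leaves to the reader.
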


\begin{proof} Let $\Cal T$ denote the set of all the triples $(e,\psi_1,\psi_2)$ in the statement. Let $\Phi\colon \overline{\Hom_{\Rng}(R_1\times R_2,S)}\to \Cal T$ be defined by $\Phi(\varphi)=(\varphi(1_{R_1}, 0_{R_2}),\varphi|_{R_1}, \varphi|_{R_2})$. Here $\varphi\colon R_1\times R_2\to S$ is any ring morphism, so that $e:=\varphi(1_{R_1}, 0_{R_2})$ is an idempotent element of $S$, and $\varphi|_{R_1}\colon R_1\to eSe$, $\varphi|_{R_2}\colon R_2\to (1-e)S(1-e)$ denote the restrictions of $\varphi$ to $R_1,R_2$ respectively (or, more precisely, to the subsets $R_1\times\{0\}$ and $\{0\}\times R_2$ of $R_1\times R_2)$. We leave to the reader to check that $\Phi$ is a well-defined surjective mapping. As far as injectivity is concerned, notice that if $\varphi\colon R_1\times R_2\to S$ and $\varphi'\colon R_1\times R_2\to S$ are ring morphisms and $\Phi(\varphi)=\Phi(\varphi')$, then $\varphi=\varphi'$ because $R_1\times R_2$ is the direct sum of $R_1$ and $R_2$ as additive abelian groups, and therefore $\varphi=\varphi'$ are completely determined by their restrictions to the direct summands $R_1$ and $R_2$ of $R_1\times R_2$.\end{proof}

\begin{proposition}\label{5.2} Let $R_1$ and $R_2$ be rings. Then there is a canonical bijection between $\overline{\Hom(R_1\times R_2)}$ and the cartesian product $\overline{\Hom(R_1)}\times\overline{\Hom( R_2)}$.\end{proposition}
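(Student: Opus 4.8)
The plan is to establish the bijection between $\overline{\Hom(R_1\times R_2)}$ and $\overline{\Hom(R_1)}\times\overline{\Hom(R_2)}$ by building on the bijection of morphism-sets furnished by Proposition~\ref{2.5}. The strategy is to pass from the level of ring morphisms to the level of their associated pairs $(\mathfrak a,M)$, so I first record what the pair of a product morphism looks like. Given a ring morphism $\varphi\colon R_1\times R_2\to S$, Proposition~\ref{2.5} writes it as a triple $(e,\psi_1,\psi_2)$ with $e=\varphi(1_{R_1},0_{R_2})$ idempotent, $\psi_1\colon R_1\to eSe$ and $\psi_2\colon R_2\to (1-e)S(1-e)$. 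The key computation is to identify $\ker\varphi$ and $\varphi^{-1}(U(S))$ in terms of the data of $\psi_1$ and $\psi_2$. Since $R_1\times R_2$ decomposes as the direct sum of its two ideals, one checks that $\ker\varphi = \ker\psi_1\times\ker\psi_2$, and that $(r_1,r_2)$ maps to a unit of $S$ precisely when $\psi_1(r_1)$ is a unit of $eSe$ and $\psi_2(r_2)$ is a unit of $(1-e)S(1-e)$; the point is that an element $es_1e+(1-e)s_2(1-e)$ is invertible in $S$ iff each corner component is invertible in its corner ring. Thus the associated pair of $\varphi$ is $(\mathfrak a_1\times\mathfrak a_2,\, M_1\times M_2)$, where $(\mathfrak a_i,M_i)$ is the pair associated to $\psi_i$ as a morphism into the corner ring $e_iSe_i$.

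Next I would define the map $\overline{\Hom(R_1\times R_2)}\to\overline{\Hom(R_1)}\times\overline{\Hom(R_2)}$ on the level of pairs by sending $(\mathfrak a,M)$ to $(\mathfrak a_1,M_1),(\mathfrak a_2,M_2))$ obtained by restricting to the two factors, namely $\mathfrak a_i = \{\,r_i\mid (\text{$r_i$ placed in slot $i$, $0$ elsewhere})\in\mathfrak a\,\}$ and similarly for $M_i$; concretely $\mathfrak a=\mathfrak a_1\times\mathfrak a_2$ and $M=M_1\times M_2$ for uniquely determined components, since any ideal of $R_1\times R_2$ splits as a product of ideals and $M$, being closed under the idempotents $(1,0),(0,1)$ by Lemma~\ref{1}(3) (absorption of $J$ and of the kernel) together with multiplicative closure, likewise splits. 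I must handle the new greatest element: the triple with $e=0$ forces $\psi_1$ to be the zero morphism $R_1\to 0$, which I assign the symbol $1\in\overline{\Hom(R_1)}$, and symmetrically $e=1$ gives the second component $1$; the fully zero morphism $R_1\times R_2\to 0$ maps to $(1,1)$. This is exactly what makes the adjoined top elements necessary, and it is why the proposition is stated for $\overline{\Hom}$ rather than $\Hom$.

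The inverse map takes a pair of elements of $\overline{\Hom(R_1)}\times\overline{\Hom(R_2)}$, realizes each component $(\mathfrak a_i,M_i)$ by a morphism $\psi_i$ (for instance the canonical one from Theorem~\ref{nice}, or the zero morphism when the component is the top element $1$), chooses an idempotent $e$ in a product ring $S_1\times S_2$ accordingly, and reassembles via Proposition~\ref{2.5} into a morphism $R_1\times R_2\to S_1\times S_2$ whose associated pair is $(\mathfrak a_1\times\mathfrak a_2, M_1\times M_2)$. That the two constructions are mutually inverse follows from the componentwise splitting computed above together with the bijectivity already proved in Proposition~\ref{2.5}.

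The step I expect to be the main obstacle is the units computation, that is, verifying that $\varphi^{-1}(U(S)) = M_1\times M_2$ with $M_i=\psi_i^{-1}(U(e_iSe_i))$ computed in the \emph{corner} rings rather than in $S$. One must be careful that invertibility of a corner element is measured inside $eSe$ (where $e$ is the identity), and that invertibility of $es_1e+(1-e)s_2(1-e)$ in $S$ decomposes as the conjunction of the two corner invertibilities; a quick argument multiplies a candidate inverse by $e$ and by $1-e$ to project it into the corners. The kernel identity and the splitting of $M$ as a genuine product set (using that $M$ contains $U(R_1\times R_2)$ and is multiplicatively closed, hence is saturated under the central idempotents $(1,0),(0,1)$) are then routine, and I would relegate them to a remark that the verification is straightforward.
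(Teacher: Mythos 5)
Your proposal is correct and follows essentially the same route as the paper: both decompose a morphism $\varphi\colon R_1\times R_2\to S$ via the idempotent $e=\varphi(1_{R_1},0_{R_2})$ (Proposition~\ref{2.5}), establish $\ker(\varphi)=\ker(\varphi|_{R_1})\times\ker(\varphi|_{R_2})$ and $\varphi^{-1}(U(S))=(\varphi|_{R_1})^{-1}(U(eSe))\times(\varphi|_{R_2})^{-1}(U((1-e)S(1-e)))$ by the corner-ring invertibility argument, treat the cases $e=0,1$ via the adjoined top elements, and get surjectivity by reassembling $\varphi_1\times\varphi_2\colon R_1\times R_2\to S_1\times S_2$. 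The only superfluous bit is your side remark deducing the splitting of $M$ from saturation under the central idempotents; the corner computation already yields $M=M_1\times M_2$ directly.
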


\begin{proof} First of all, we show that, for any ring morphism $\varphi\colon R_1\times R_2\to S$, we have 
\begin{equation}\label{ll}\ker(\varphi)=\ker(\varphi|_{R_1})\times \ker(\varphi|_{R_2})\end{equation}
 and
 \begin{equation}\label{lll}\varphi^{-1}(U(S))=(\varphi|_{R_1})^{-1}(U(eSe))\times (\varphi|_{R_2})^{-1}(U((1-e)S(1-e))).\end{equation} 
  Here, like in the proof of Proposition~\ref{2.5}, $e$ is the image  via $\varphi$ of the idempotent element $(1_{R_1}, 0_{R_2})$ of $R_1\times R_2$, and $\varphi|_{R_1}\colon R_1\to eSe$, $\varphi|_{R_2}\colon R_2\to (1-e)S(1-e)$ are the restrictions of $\varphi$ to $R_1,R_2$. We leave the easy proof of (\ref{ll}) to the reader. As far as (\ref{lll}) is concerned, notice that this formula makes no sense when one of the morphisms $\varphi$, $\varphi|_{R_1}$ or $\varphi|_{R_2}$ is zero. In these three cases, either $e=0$ or $e=1$, and the morphisms $\varphi, \varphi|_{R_1}, \varphi|_{R_2})$ correspond to the greatest element $1$ of $\overline{\Hom(R_1\times R_2)}$, $\overline{\Hom(R_1)}$ or $\overline{\Hom( R_2)}$, respectively. Also remark that if $(r_1,r_2)\in R_1\times R_2$, then $(r_1,r_2)\in \varphi^{-1}(U(S))$ if and only if $\varphi(r_1,r_2)\in U(S)$. Now $\varphi(r_1,r_2)\in eSe\times(1-e)S(1-e)$ is invertible in $S$ if and only if it is invertible in $eSe\times(1-e)S(1-e)$, that is, if and only if $(\varphi|_{R_1})(r_1)$ is invertible in $eSe$ and $(\varphi|_{R_2})(r_2)$ is invertible in $(1-e)S(1-e)$. This concludes the proof of (2).
  
  From (1) and (2), it follows that the mapping $$\overline{\Hom(R_1\times R_2)}\to\overline{\Hom(R_1)}\times\overline{\Hom( R_2)},$$ 
  \begin{align*}
&(\ker(\varphi),\varphi^{-1}(U(S)))\mapsto\\
 &((\ker(\varphi|_{R_1}), ( \varphi|_{R_1})^{-1}(U(eSe))),(\ker(\varphi|_{R_2}), (\varphi|_{R_2})^{-1}(U((1-e)S(1-e))))
  \end{align*}
   is a well-defined injective mapping. Its surjectivity is proved considering, for any pair of ring morphisms $\varphi_1\colon R_1\to S_1$, $\varphi_2\colon R_2\to S_2$, the ring morphism $\varphi_1\times\varphi_2\colon R_1\times R_2\to S_1\times S_2$.
\end{proof}

We saw in Lemma~\ref{2.6}, and the paragraph following it, that the partially ordered set $\Hom(R)$ is a meet-semilattice, so that $\Hom(R)$, with respect to the operation $\wedge$, is a commutative semigroup in which every element is idempotent. Now $\overline{\Hom(R)}$ is also a meet-semilattice, but with a greatest element $1$, so $\overline{\Hom(R)}$, with respect to the operation $\wedge$, turns out to be  a commutative monoid in which every element is idempotent. Hence we can view the functor $\overline{\Hom(-)}$ as a functor of $\Rng$ into the category $\CMon$ of commutative monoids. Now, commutative monoids have a spectrum, set of its prime ideals, i.e., there is a contravariant functor $\Spec$ from the category $\CMon$ to the category $\Top$ of topological spaces \cite{Ilia}. For every commutative monoid $A$, $\Spec(A)$ is a spectral space in the sense of Hochster.
Hence the composite functor $\Spec\circ\overline{\Hom(-)}\colon \Rng\to\Top$ associates to every ring a spectral topological space.

\begin{theorem}\label{5.3} For every ring $R$, the partially ordered set $\overline{\Hom(R)}$ is a bounded lattice.\end{theorem}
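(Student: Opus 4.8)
The plan is to verify the two defining features of a bounded lattice separately: boundedness, which is essentially immediate, and the existence of binary meets and joins, where only the joins require work. For boundedness, the element $1$ is the greatest element of $\overline{\Hom(R)}$ by construction, while the least element $(0,U(R))$ of $\Hom(R)$ (the one attached to the identity morphism $R\to R$) remains the least element after adjoining a top. For meets, we already know from Lemma~\ref{2.6} that $\Hom(R)$ is a meet-semilattice, and meets involving the top are trivial since $1\wedge x=x$; hence all binary meets exist in $\overline{\Hom(R)}$.

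The substance of the proof is the existence of joins, and the cleanest route is through completeness rather than a direct construction. First I would upgrade Lemma~\ref{2.6} from binary to arbitrary nonempty meets: given a nonempty family $(\mathfrak a_i,M_i)_{i\in I}$ in $\Hom(R)$, pick ring morphisms $\varphi_i\colon R\to S_i$ realizing each pair and form the product morphism $\Phi\colon R\to\prod_{i\in I}S_i$. Since a finite or infinite product of nonzero rings is again a nonzero ring, and since an element of a product is invertible exactly when every component is, a short computation gives $\ker\Phi=\bigcap_i\mathfrak a_i$ and $\Phi^{-1}(U(\prod_i S_i))=\bigcap_i M_i$. Thus $(\bigcap_i\mathfrak a_i,\bigcap_i M_i)\in\Hom(R)$ is the meet of the family, so $\overline{\Hom(R)}$ admits all nonempty meets (and the empty meet is the top $1$). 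With arbitrary meets in hand, joins come for free by the standard order-theoretic fact that in a poset with a greatest element one may define $a\vee b$ as the meet of the (always nonempty, since $1$ lies in it) set $V$ of upper bounds of $\{a,b\}$: because $a$ and $b$ are lower bounds of $V$, they lie below $m:=\bigwedge V$, so $m$ is itself an upper bound and hence the least one. This shows $\overline{\Hom(R)}$ is in fact a complete lattice, a fortiori a bounded lattice.

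The step I expect to be the genuine obstacle is precisely the construction of joins, and it is worth recording why the naive alternative is unsatisfactory. One is tempted to build $a\vee b$ for $a=(\mathfrak a,M)$, $b=(\mathfrak a',M')$ directly, taking the kernel to be the ideal $\mathfrak a+\mathfrak a'$ and the monoid part to be the submonoid generated by $M\cup M'$, or categorically to use the pushout (amalgamated coproduct) of the two realizing morphisms. The trouble is twofold: it is not clear that such a candidate pair is realized by any ring morphism, i.e.\ that it lies in $\Hom(R)$ at all, and even when the pushout is nonzero it only produces an upper bound with respect to the morphism preorder $\sigma$, which need not be the least upper bound for the order $\le$ that compares kernels and unit-preimages (the two differ, as Remark~\ref{4} shows). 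Routing everything through arbitrary meets and products of rings sidesteps both difficulties, so the only computation I would actually carry out is the identification of $\ker\Phi$ and $\Phi^{-1}(U(\prod_i S_i))$ for the product morphism, together with the elementary lemma that the meet of the upper bounds is the join.
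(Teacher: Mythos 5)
Your proof is correct, but it takes a genuinely different route from the paper. The paper only constructs binary joins, and it does so via the pushout you dismiss: it forms the pushout $\omega\colon R\to S_{(R/\mathfrak a,M/\mathfrak a)}*_R S_{(R/\mathfrak a',M'/\mathfrak a')}$ of the two \emph{canonical} representatives from Theorem~\ref{nice}, and shows its associated pair is the least upper bound. The gap you worry about (that the pushout is only universal for the preorder $\sigma$, not for $\le$) is bridged exactly by the universal property of Theorem~\ref{nice}: any morphism $\chi\colon R\to T$ whose pair dominates $(\mathfrak a,M)$ in the order $\le$ automatically factors through the canonical $\psi$, so $\le$-comparisons are converted into $\sigma$-factorizations and the pushout's pair really is the least upper bound (with the zero pushout corresponding to the adjoined top $1$). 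Your route instead upgrades Lemma~\ref{2.6} to arbitrary set-indexed meets via the product morphism $R\to\prod_{i\in I}S_i$ --- using that a product of nonzero unital rings is nonzero and that invertibility in a product is componentwise --- and then obtains joins by the standard order-theoretic argument that a poset with a top and all nonempty meets is a complete lattice. (To choose realizing morphisms for a set-indexed family without set-theoretic fuss you should take the canonical representatives $\psi_i\colon R\to S_{(R/\mathfrak a_i,M_i/\mathfrak a_i)}$, since the class of all realizing morphisms for a given pair is proper.) Your argument proves the stronger statement that $\overline{\Hom(R)}$ is a complete lattice, at the cost of giving no explicit ring morphism realizing the join; the paper's argument is purely binary but produces a canonical representative of $(\mathfrak a,M)\vee(\mathfrak a',M')$, which feeds into the functor $F_R\colon\Hom(R)\to\Rng$ and the monoid structure discussed afterwards.
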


\begin{proof} It is clear that $\overline{\Hom(R)}$ is a partially ordered set with a least element $(0,U(R))$ and a greatest element $1$. Since we already know that $\Hom(R)$ is a meet-semilattice, we only have to show that any pair of elements $(\mathfrak a,M),(\mathfrak a',M')$ of ${\Hom(R)}$ has a least upper bound in $\overline{\Hom(R)}$ (it is clear that the least upper bound exists when one of the two elements is the greatest element $1$ of $\overline{\Hom(R)}$ ). Suppose $(\mathfrak a,M),(\mathfrak a',M')\in{\Hom(R)}$. Let $\psi\colon R\to S_{(R/\mathfrak a,M/\mathfrak a)} $ be the ring morphism corresponding to the pair $(\mathfrak a,M)$ as in the statement of Theorem~\ref{nice}. Similarly for $\psi'\colon R\to S_{(R/\mathfrak a',M'/\mathfrak a')} $. Let $\omega\colon R\to P:=S_{(R/\mathfrak a,M/\mathfrak a)} *_R S_{(R/\mathfrak a',M'/\mathfrak a')}$ be the pushout of $\psi$ and $\psi'$ in $\Rng$, and $(\mathfrak a'',M'')$ the element of $\overline{\Hom(R)}$ corresponding to $\omega$. We will now prove that $(\mathfrak a'',M'')=(\mathfrak a,M)\vee(\mathfrak a',M')$ in the partially ordered set $\overline{\Hom(R)}$. Since $\omega$ factors through $\psi$, we have that $(\mathfrak a'',M'')\ge (\mathfrak a,M)$. Similarly, $(\mathfrak a'',M'')\ge (\mathfrak a',M')$. Conversely, let $\chi\colon R\to T$ be any morphism with associated pair  $(\mathfrak b,N)$ and with $(\mathfrak b,N)\ge (\mathfrak a,M),(\mathfrak a',M')$. By the universal property of Theorem~\ref{nice}, there is a unique ring morphism $g\colon S_{(R/\mathfrak a,M/\mathfrak a)}\to T$ such that $g\psi=\chi$. Similarly, there is a unique ring morphism $g'\colon S_{(R/\mathfrak a',M/\mathfrak a')} \to T$ such that $g'\psi'=\chi$. By the universal property of pushout, there exists a unique morphism $h\colon P\to T$ such that $h\varepsilon=g$ and $h\varepsilon'=g'$, where $\varepsilon\colon S_{(R/\mathfrak a,M/\mathfrak a)}\to P$ and $\varepsilon'\colon S_{(R/\mathfrak a',M/\mathfrak a')}\to P$ are the canonical mappings into the pushout. Then $h\omega=h\varepsilon\psi=g\psi=\chi$, so $\chi$ factors through $\omega$, hence $(\mathfrak b,N)\ge (\mathfrak a'',M'')$. 

\bigskip

\[\xymatrix{R\ar[r]^{\psi}\ar[dr]^{\omega}\ar[d]_{\psi'}& S_{(R/\mathfrak a,M/\mathfrak a)} \ar[d]^{\varepsilon}\ar[ddr]^g & \\ 
S_{(R/\mathfrak a',M'/\mathfrak a')} \ar[r]_{\ \ \ \ \varepsilon'} \ar[drr]_{g'} & P\ar[dr]^h & \\
& & T}\]\end{proof}

Hence $\overline{\Hom(R)}$, with respect to the operation $\vee$, turns out to be a commutative monoid with zero (the element $1$) and identity the element $(0,U(R))$, in which all elements are idempotent.

\section{Ring epimorphisms}

Recall that a ring morphism $\varphi\colon R\to S$ is an {\em epimorphism} if, for all ring morphisms $\psi,\psi'\colon S\to T$, $\psi\varphi=\psi'\varphi$ implies $\psi=\psi'$.

\begin{proposition} {\rm (\cite{Knight}, \cite[Proposition~4.1.1]{Cohn}), \cite[Proposition~XI.1.2]{Stenstrom}} The following conditions on a ring morphism $\varphi\colon R\to S$ are equivalent:

{\rm (a)} $\varphi$ is an epimorphism,

{\rm (b)}  $s\otimes 1=1\otimes s$ in the $S$-$S$-bimodule $S\otimes_R S$ for all $s\in S$.

{\rm (c)}  The $R$-$R$-bimodule $S\otimes_R S$ is isomorphic to the $R$-$R$-bimodule $S$ via the canonical isomorphism induced by the multiplication $\cdot\colon S\times S\to S$ of the ring $R$.

{\rm (d)}  The pushout $R\to S*_RS$ of $\varphi$ with itself is naturally isomorphic to $R\to S$.

{\rm (e)} $S\otimes_R (S/\varphi(R))=0$. \end{proposition}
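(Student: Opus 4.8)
The plan is to prove the equivalence of the five standard characterizations of a ring epimorphism by establishing a cycle of implications, with the tensor-product conditions forming the technical core. The statement is classical (Cohn, Knight, Stenström), so my goal is to reproduce a clean self-contained argument. I would carry out the implications in the order $(a)\Rightarrow(b)\Rightarrow(c)\Rightarrow(e)\Rightarrow(b)$ together with $(c)\Leftrightarrow(d)$, which together pin down all five conditions.

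First I would prove $(a)\Rightarrow(b)$ by the coequalizer trick. Consider the $S$-$S$-bimodule $T:=(S\otimes_R S)/N$, where $N$ is the sub-bimodule generated by the elements $s\otimes 1 - 1\otimes s$ for $s\in S$; equivalently, $T$ is the quotient making the two maps $s\mapsto \overline{s\otimes 1}$ and $s\mapsto \overline{1\otimes s}$ agree. One builds the ring $S\oplus T$ (trivial square extension) or uses the ``ring of dual numbers'' construction: define two ring morphisms $\psi,\psi'\colon S\to S\oplus T$ by $\psi(s)=(s,\overline{s\otimes 1})$ and $\psi'(s)=(s,\overline{1\otimes s})$. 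Since the two $R$-balanced maps agree after composing with $\varphi$, we get $\psi\varphi=\psi'\varphi$, so $\varphi$ epi forces $\psi=\psi'$, i.e.\ $\overline{s\otimes 1}=\overline{1\otimes s}$ in $T$ for all $s$. A short diagram argument then upgrades this to equality already in $S\otimes_R S$, giving (b). The implication $(b)\Rightarrow(c)$ is then a direct computation: the multiplication map $\mu\colon S\otimes_R S\to S$ is always a surjective bimodule map, and condition (b) produces a two-sided inverse $s\mapsto s\otimes 1 = 1\otimes s$, so $\mu$ is an isomorphism. Conversely $(c)\Rightarrow(a)$ follows because if $\mu$ is an isomorphism, then for any $\psi,\psi'\colon S\to T$ with $\psi\varphi=\psi'\varphi$, the map $s\otimes s'\mapsto \psi(s)\psi'(s')$ is well defined on $S\otimes_R S\cong S$, forcing $\psi(s)=\psi(s)\psi'(1)=\psi'(s)$.

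For the remaining conditions, I would treat $(c)\Leftrightarrow(d)$ categorically: the pushout $S*_R S$ in $\Rng$ represents $R\to S$ being an epimorphism in exactly the sense that the two structure maps $S\rightrightarrows S*_R S$ coincide, and identifying the pushout construction with $S\otimes_R S$ endowed with its natural ring structure makes (c) and (d) literally the same statement, namely that the codiagonal $S*_R S\to S$ is an isomorphism. Finally, $(b)\Leftrightarrow(e)$ comes from the right-exact sequence
\begin{equation*}
S\otimes_R R \longrightarrow S\otimes_R S \longrightarrow S\otimes_R(S/\varphi(R))\longrightarrow 0,
\end{equation*}
where the first map sends $s\otimes 1\mapsto s\otimes 1$. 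The image of this first map is exactly the sub-bimodule generated by the pure tensors $s\otimes \varphi(r)$, and condition (b) says precisely that every $s\otimes s'$ already lies in this image (since $s\otimes s' = ss'\otimes 1$ modulo the relations), which is equivalent to vanishing of the cokernel $S\otimes_R(S/\varphi(R))$.

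The main obstacle I anticipate is the sharpening step inside $(a)\Rightarrow(b)$: the coequalizer argument naturally yields $s\otimes 1=1\otimes s$ only in the quotient $T$, and one must argue that $N=0$, i.e.\ that the relations are already satisfied in $S\otimes_R S$ itself. The clean way around this is to choose the test ring so that $T$ is honestly $S\otimes_R S$ with its tautological bimodule structure and the two morphisms land in the trivial-extension ring $S\ltimes(S\otimes_R S)$; then equality of the two morphisms is exactly $s\otimes 1 = 1\otimes s$ in $S\otimes_R S$ with no further quotient, so no sharpening is needed. Everything else is routine bimodule bookkeeping, and I would leave those verifications to the reader or cite \cite{Cohn} and \cite{Stenstrom} for the standard details.
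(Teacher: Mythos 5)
The paper itself offers no proof of this proposition; it simply cites Knight, Cohn and Stenstr\"om, so your attempt has to be judged on its own. Your overall architecture (a cycle through (a), (b), (c), plus (b)$\Leftrightarrow$(e) and a separate treatment of (d)) is the standard one, and the implications (b)$\Rightarrow$(c), (c)$\Rightarrow$(a) and (b)$\Rightarrow$(e) are fine. But the key step (a)$\Rightarrow$(b) is broken as written: the maps $\psi(s)=(s,\,s\otimes 1)$ and $\psi'(s)=(s,\,1\otimes s)$ into the trivial extension $S\ltimes(S\otimes_RS)$ are \emph{not} ring morphisms. Computing in $S\ltimes M$ with $(s,m)(s',m')=(ss',\,sm'+ms')$ one gets $\psi(s)\psi(s')=(ss',\,ss'\otimes 1+s\otimes s')$, whereas $\psi(ss')=(ss',\,ss'\otimes 1)$; these differ by the cross term $s\otimes s'$, which is nonzero in general. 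The same defect persists in your proposed ``clean way around'' at the end, since it reuses the same two maps. The standard repair is the difference trick: $f(s)=(s,\,1\otimes s-s\otimes 1)$ \emph{is} a ring morphism (the cross terms cancel), $g(s)=(s,0)$ is another, and $f\varphi=g\varphi$ because $1\otimes\varphi(r)=\varphi(r)\otimes 1$ in $S\otimes_RS$; epi-ness then gives $1\otimes s=s\otimes 1$ directly, with no auxiliary quotient $T$ and no sharpening step needed.

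The second genuine error is in (c)$\Leftrightarrow$(d). The pushout $S*_RS$ in $\Rng$ is the ring coproduct over $R$, not $S\otimes_RS$: for $R=k$ a field and $S=k[x]$ one has $S*_kS=k\langle x,y\rangle$ while $S\otimes_kS=k[x,y]$, and over a noncommutative $R$ the tensor product $S\otimes_RS$ carries no natural ring structure at all. So (c) and (d) are not ``literally the same statement,'' and an argument that identifies $S*_RS$ with $S\otimes_RS$ cannot be correct except after one already knows $\varphi$ is an epimorphism. The clean route is to tie (d) directly to (a) by the purely categorical fact that $\varphi$ is an epimorphism if and only if the two coprojections $S\rightrightarrows S*_RS$ coincide, if and only if each of them is an isomorphism; this closes the cycle without ever comparing $S*_RS$ to $S\otimes_RS$. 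Finally, in (b)$\Leftrightarrow$(e) the direction (e)$\Rightarrow$(b) deserves one more line than you give it: if the image of $S\otimes_R\varphi(R)$ is all of $S\otimes_RS$, then $1\otimes s=t\otimes 1$ for some $t\in S$, and applying the multiplication map yields $t=s$, whence $1\otimes s=s\otimes 1$.
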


\begin{proposition}\label{X} Let $\varphi\colon R\to S$ be a ring morphism and $(\mathfrak a,M)$ be its corresponding pair in $\Hom(R)$. Let $T$ be the subring of $S$ generated by $\varphi(R)$ and all the elements $\varphi(m)^{-1}$ ($m\in M$). Then the corestriction $\varphi|^T\colon R\to T$ is a ring epimorphism and its corresponding pair in $\Hom(R)$ is $(\mathfrak a,M)$.\end{proposition}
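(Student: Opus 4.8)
For a ring morphism $\varphi\colon R\to S$ with associated pair $(\mathfrak a,M)$, let $T\subseteq S$ be the subring generated by $\varphi(R)$ together with all $\varphi(m)^{-1}$ ($m\in M$). Then the corestriction $\varphi|^T\colon R\to T$ is a ring epimorphism and its associated pair is again $(\mathfrak a,M)$.

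Let me sketch a proof plan.

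**Understanding the setup.** We need to verify three things: (1) $T$ is well-defined, i.e., the inverses $\varphi(m)^{-1}$ actually exist in $S$ for $m\in M$; (2) $\varphi|^T$ has the same associated pair $(\mathfrak a, M)$; (3) $\varphi|^T$ is an epimorphism. Point (1) is immediate: $M = \varphi^{-1}(U(S))$, so each $\varphi(m)$ is a unit in $S$, hence has an inverse in $S$, which by construction lies in $T$.

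Let me think about the strategy for each part.

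**Strategy for part (2) — same associated pair.** The kernel is easy: $\ker(\varphi|^T) = \ker(\varphi) = \mathfrak a$ since corestriction doesn't change the kernel (we're just shrinking the codomain to contain the image and some inverses — the map on $R$ is literally the same). The subtle part is $M' := (\varphi|^T)^{-1}(U(T)) = M$. One inclusion: if $r \in M$, then $\varphi(r)$ is invertible in $S$ with inverse $\varphi(r)^{-1} \in T$ by construction, so $\varphi(r) = \varphi|^T(r)$ is invertible in $T$, giving $r \in M'$. The other inclusion $M' \subseteq M$: if $r \in M'$ then $\varphi|^T(r) \in U(T) \subseteq U(S)$, so $\varphi(r) \in U(S)$, hence $r \in M$. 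This direction uses that units of a subring are units of the ambient ring. So $M' = M$.

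**Strategy for part (3) — epimorphism.** This is the main content. I will use condition (d) from the preceding Proposition: $\varphi|^T$ is an epimorphism iff the pushout $R \to T *_R T$ is naturally isomorphic to $R \to T$; equivalently by (a), I can argue directly. I would argue directly via the definition: suppose $\psi, \psi' \colon T \to U$ satisfy $\psi \circ (\varphi|^T) = \psi' \circ (\varphi|^T)$. I must show $\psi = \psi'$. Two ring morphisms out of $T$ agreeing on a generating set agree everywhere, so it suffices to check they agree on the generators $\varphi(R)$ and the $\varphi(m)^{-1}$. On $\varphi(R)$: for $r\in R$, $\psi(\varphi(r)) = \psi'(\varphi(r))$ by hypothesis. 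On $\varphi(m)^{-1}$: since $\psi$ is a ring morphism, $\psi(\varphi(m))\psi(\varphi(m)^{-1}) = \psi(1) = 1$, so $\psi(\varphi(m)^{-1}) = \psi(\varphi(m))^{-1}$, the inverse being forced and unique in $U$. Likewise $\psi'(\varphi(m)^{-1}) = \psi'(\varphi(m))^{-1}$. Since $\psi(\varphi(m)) = \psi'(\varphi(m))$, their inverses coincide, so $\psi$ and $\psi'$ agree on all generators, hence $\psi = \psi'$.

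**Main obstacle.** The one step requiring genuine care is the claim that $T$ is generated as a ring by $\varphi(R)$ and the $\varphi(m)^{-1}$ in a way that makes the "agree on generators forces equality" argument airtight: I must be sure every element of $T$ is a finite sum of products of elements of $\varphi(R)$ and the chosen inverses, and that a ring morphism respects such expressions. This is routine once one recalls that the subring generated by a set is exactly the set of all such finite sums of finite products, together with the image of $1$. The inverse-of-a-unit uniqueness argument (that $\psi$ is \emph{forced} on $\varphi(m)^{-1}$) is the conceptual crux and is what makes the epimorphism property work — it is precisely the phenomenon that localization-type maps are epimorphisms. I expect no serious difficulty; the whole proof is short and formal, with the epimorphism verification being the only part that is more than bookkeeping.
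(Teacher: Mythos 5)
Your proposal is correct, and the verification of the associated pair (kernel unchanged under corestriction; $M\subseteq(\varphi|^T)^{-1}(U(T))$ because the inverses were put into $T$ by construction, and the reverse inclusion because $U(T)\subseteq U(S)$) matches the paper's. For the epimorphism part, however, you take a genuinely different route. The paper verifies the tensor criterion from the preceding Proposition (condition (b)): it considers the subset $T'=\{\,a\in T\mid a\otimes 1=1\otimes a\ \mbox{in}\ T\otimes_R T\,\}$, checks that $T'$ is a subring of $T$ containing $\varphi(R)$, and then computes $\varphi(m)^{-1}\otimes 1=\varphi(m)^{-1}\otimes\varphi(m)\varphi(m)^{-1}=\varphi(m)^{-1}\varphi(m)\otimes\varphi(m)^{-1}=1\otimes\varphi(m)^{-1}$ to conclude $T'=T$. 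You instead argue directly from the definition (condition (a)): for $\psi,\psi'\colon T\to U$ with $\psi\varphi|^T=\psi'\varphi|^T$, the equalizer of $\psi$ and $\psi'$ is a subring of $T$ containing $\varphi(R)$, and it contains each $\varphi(m)^{-1}$ because a two-sided inverse in $U$ is unique, so $\psi(\varphi(m)^{-1})=\psi(\varphi(m))^{-1}=\psi'(\varphi(m))^{-1}=\psi'(\varphi(m)^{-1})$. Both arguments rest on the same structural observation---a suitable subset of $T$ containing the generators is a subring, hence all of $T$---but yours is more elementary, needing no tensor products and no appeal to the equivalence of the epimorphism criteria, while the paper's is a single computation internal to $T\otimes_R T$ that avoids quantifying over all pairs of morphisms and showcases the criterion it has just recorded. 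Either proof is complete.
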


\begin{proof} Let $T'$ be the subset of $T$ consisting of all elements $a\in T$ with $a\otimes 1=1\otimes a$ in the $S$-$S$-bimodule $T\otimes_R T$. The subset $T'$ of $T$ is a subring of $T$ that contains $\varphi(R)$, because $T\otimes_R T$ is a $T$-$T$-bimodule in which multiplication by elements of $T$ is defined by $t(t'\otimes t'')=(tt')\otimes t''$ and $(t'\otimes t'')t=t'\otimes (t''t)$, so that $a\otimes 1=1\otimes a$ and $b\otimes 1=1\otimes b$ imply $(ab)\otimes 1=a(b\otimes 1)=a(1\otimes b)=a\otimes b=(a\otimes 1)b=(1\otimes a)b=1\otimes (ab)$. This shows that $T'$ is a subring of $T$. Moreover if $m\in M$, then in $T\otimes_R T$ we have that $\varphi(m)^{-1}\otimes 1=\varphi(m)^{-1}\otimes \varphi(m)\varphi(m)^{-1}=\varphi(m)^{-1}\varphi(m)\otimes \varphi(m)^{-1}=1\otimes \varphi(m)^{-1}$. It follows that $T\subseteq T'$, hence $T= T'$. It follows that the corestriction $\varphi|^T\colon R\to T$ is an epimorphism. Finally $M=\varphi^{-1}(U(S))$ and $\varphi(m)\in U(T)$ for every $m\in M$. Thus $M\subseteq \varphi^{-1}(U(T))\subseteq \varphi^{-1}(U(S))=M$.\end{proof}

Now consider the universal construction of  Theorem~\ref{nice}. Let $\varphi\colon R\to S$ be a ring morphism and $(\mathfrak a,M)$ be its corresponding pair in $\Hom(R)$. Via the canonical ring morphism $\psi\colon R\to S_{(R/\mathfrak a,M/\mathfrak a)}$, the subring $T$ of $S_{(R/\mathfrak a,M/\mathfrak a)}$ generated by $\psi(R)$ and the inverses of the images of the elements of $M$ is the whole ring $S_{(R/\mathfrak a,M/\mathfrak a)}$. It follows that the canonical ring morphism $\psi\colon R\to S_{(R/\mathfrak a,M/\mathfrak a)}$ is a ring epimorphism. 

More generally, for any ring morphism $\varphi\colon R\to S$, we have the canonical factorization described in the next Theorem:

\begin{theorem}\label{6.3} Let $\varphi\colon R\to S$ be any ring morphism and $(\mathfrak a,M)$ its corresponding element in $\Hom(R)$. Then $\varphi$ is the composite ring morphism of the mappings 
\[\xymatrix{R\ar[r]^{\pi\ }& R/\mathfrak a\ar[r]^{\chi\ \ \ }& S_{(R/\mathfrak a,M/\mathfrak a)}\ar[r]^{\ \ \ \ \ g} & T\ar[r]^\varepsilon &S}\]
where $T$ is the subring of $S$ generated by $\varphi(R)$ and the inverses $\varphi(m)^{-1}$ of the images of the elements of $M$, $g\colon S_{(R/\mathfrak a,M/\mathfrak a)}\to T$ is a surjective ring  epimorphism and $\varepsilon\colon T\to S$ is the ring embedding.\end{theorem}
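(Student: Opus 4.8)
The plan is to reduce everything to the universal property established in Theorem~\ref{nice}, since three of the four maps in the statement are already fixed by earlier constructions: $\pi$ is the canonical projection, $\chi=\chi_{(R/\mathfrak a,M/\mathfrak a)}$ is the universal inverting map, and the composite $\psi:=\chi\pi\colon R\to S_{(R/\mathfrak a,M/\mathfrak a)}$ is precisely the canonical morphism realizing the pair $(\mathfrak a,M)$. So the real content is to produce $g$ and $\varepsilon$ and to verify the asserted properties (factorization, surjectivity of $g$, epimorphism property of $g$). The whole argument is essentially a repackaging of Theorem~\ref{nice} and Proposition~\ref{X}.

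First I would apply Theorem~\ref{nice} to the morphism $\varphi$ itself. Since $(\mathfrak a,M)$ is by hypothesis the pair corresponding to $\varphi$, we have $\ker(\varphi)=\mathfrak a\supseteq\mathfrak a$ and $\varphi^{-1}(U(S))=M\supseteq M$, so the universal property yields a unique ring morphism $G\colon S_{(R/\mathfrak a,M/\mathfrak a)}\to S$ with $G\psi=\varphi$.

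Next I would check that $\im(G)\subseteq T$. Recall from the uniqueness argument in the proof of Theorem~\ref{nice} that $S_{(R/\mathfrak a,M/\mathfrak a)}$ is generated as a ring by $\psi(R)$ together with the inverses $\psi(m)^{-1}$ ($m\in M$). Now $G(\psi(r))=\varphi(r)\in\varphi(R)\subseteq T$ for every $r\in R$, and $G(\psi(m)^{-1})=\varphi(m)^{-1}\in T$ by the very definition of $T$. Hence $G$ carries a generating set of $S_{(R/\mathfrak a,M/\mathfrak a)}$ into the subring $T$, so $\im(G)\subseteq T$; writing $\varepsilon\colon T\hookrightarrow S$ for the inclusion (manifestly a ring embedding), we obtain the corestriction $g\colon S_{(R/\mathfrak a,M/\mathfrak a)}\to T$ with $\varepsilon g=G$. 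This already gives the required factorization $\varphi=G\psi=\varepsilon\,g\,\chi\,\pi$. Surjectivity of $g$ is then immediate: $\im(g)$ is a subring of $T$ containing both $\varphi(R)$ (since $g\psi(r)=\varphi(r)$) and every $\varphi(m)^{-1}$, and these elements generate $T$ by definition.

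Finally, to see that $g$ is a ring epimorphism I would exploit the factorization $g\psi=\varphi|^T$, the corestriction of $\varphi$ to $T$; indeed $\varepsilon(g\psi)=G\psi=\varphi$ and $\varepsilon$ is the inclusion, so $g\psi$ is exactly $\varphi|^T$. By Proposition~\ref{X}, $\varphi|^T$ is a ring epimorphism, and the elementary categorical fact that $h\circ k$ being an epimorphism forces $h$ to be an epimorphism (applied with $h=g$, $k=\psi$) then gives that $g$ is a ring epimorphism. I do not expect a genuine obstacle here: the only points requiring care are the bookkeeping that $G$ lands in $T$, which rests on knowing the explicit generators of $S_{(R/\mathfrak a,M/\mathfrak a)}$, and correctly invoking the left-cancellation direction of the epimorphism argument.
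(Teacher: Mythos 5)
Your proof is correct and follows essentially the same route as the paper's: both rest on the universal property of Theorem~\ref{nice} together with Proposition~\ref{X}, the only cosmetic difference being that the paper applies Theorem~\ref{nice} directly to the corestriction $\varphi|^T\colon R\to T$, whereas you apply it to $\varphi$ itself and then corestrict the resulting map $G$ to $T$ via the generator bookkeeping. The surjectivity argument (generation of $T$ by $\varphi(R)$ and the $\varphi(m)^{-1}$) and the epimorphism argument (right-cancellation through $g\psi=\varphi|^T$) coincide with the paper's.
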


This theorem shows that any ring morphisms $\varphi\colon R\to S$ can be factorized as:

(1) a canonical mapping $R\to S_{(R/\mathfrak a,M/\mathfrak a)}$, which only depends on the pair $(\mathfrak a,M)\in\Hom(R)$ associated to $\varphi$.

(2) A ring morphism $S_{(R/\mathfrak a,M/\mathfrak a)}\to T$, which is a surjective mapping and is an epimorphism in the category of rings.

(3) A ring embedding $\varepsilon\colon T\to S$.

\begin{proof} Apply the universal property of Theorem~\ref{nice} to the corestriction $\varphi|^T\colon R\to T$, getting a factorization $\xymatrix{R\ar[r]^{\pi}& R/\mathfrak a\ar[r]^{\chi}& S_{(R/\mathfrak a,M/\mathfrak a)}\ar[r]^g & T}$ of $\varphi|^T$. The mapping $g$ is surjective, because $T$ is generated by the images of the elements of $R$ and the inverses of the elements of $M$, like $S_{(R/\mathfrak a,M/\mathfrak a)}$. Moreover, $g$ is a ring epimorphism, because $\varphi|^T\colon R\to T$ is a ring epimorphism by Proposition~\ref{X}, so $\psi g=\psi'g$ implies $\psi g\chi\pi=\psi'g\chi\pi$, i.e., $\psi \varphi|^T=\psi'\varphi|^T$, from which $\psi=\psi'$. \end {proof}

\bigskip

For any other ring morphism $f\colon R\to S'$ such that $\ker (f)=\mathfrak a$ and $f^{-1}(U(S'))=M$, there is a unique ring morphism $g\colon S_{(R/\mathfrak a,M/\mathfrak a)}\to S'$ such that $g\varphi=f$. It follows that the subring $T'$ of $S'$ generated by $f(R)$ and the elements $f(m)^{-1}$ is the image $g(S_{(R/\mathfrak a,M/\mathfrak a)})$ of $S_{(R/\mathfrak a,M/\mathfrak a)}$. Hence the corestriction $f|^{T'}\colon R\to T'$ is the composite mapping of $\varphi\colon R\to S_{(R/\mathfrak a,M/\mathfrak a)}$ and the corestriction $g|^{T'}\colon S_{(R/\mathfrak a,M/\mathfrak a)}\to T'$. 

\bigskip

Recall that a subset $T$ is a {\em left Ore subset} of $R$ if it is a submonoid of $R$ such that $Tr\cap Rt\ne\emptyset$ for every $r\in R$ and $t\in T$. A subset $T$ of the 
ring $R$ is called a {\em left denominator set} if it is a left Ore subset and,  for every $r\in R$, $t\in T$, if $rt=0$, then there exists $t'\in T$ with $t'r=0$. A left ring of fractions $\varphi\colon R\to [T^{-1}]R$ exists if and only if $T$ is a left denominator set in $R$.

Compare Lemma~\ref{1}(4) with the fact that a left quotient ring $[T^{-1}]R$ of a ring $R$ with respect to a multiplicatively closed subset $T$ of $R$ exists if and only if T is a left Ore set and the set $\overline{T}=\{\,t+\ass(T)\in R/\ass(T)\mid t\in T\,\}$ consists of regular elements (see \cite[2.1.12]{6} and \cite{5}). Here $\ass(T)$ denotes the set of all elements $r\in R$ for which there exists an element $t\in T$ with $tr=0$. That is, $\ass(T)$ is the kernel $\mathfrak a$ of the canonical morphism $R\to [T^{-1}]R$.

\begin{Lemma}\label{2'} Let $T$ be a left denominator set in $R$ and $\varphi\colon R\to S=[T^{-1}]R$ the canonical mapping into the left ring of fractions. Then $M:=\varphi^{-1}(U(S))$ is a left denominator set in $R$ containing $T$, $\mathfrak a=\ass(T)=\ass(M)$ and $S=[M^{-1}]R$.\end{Lemma}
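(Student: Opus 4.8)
The plan is to prove the three assertions in the order stated, using throughout the standard features of the Ore localization $\varphi\colon R\to S=[T^{-1}]R$ recalled before the statement: every element of $S$ has the form $\varphi(t)^{-1}\varphi(s)$ with $t\in T$ and $s\in R$, each $\varphi(t)$ is a unit of $S$, and $\ker\varphi=\ass(T)=\mathfrak a$. The inclusion $T\subseteq M$ is then immediate, since $\varphi(t)\in U(S)$ forces $t\in\varphi^{-1}(U(S))=M$, and $M$ is a submonoid of the multiplicative monoid of $R$ by Lemma~\ref{1}(1). So it remains to verify the two defining conditions of a left denominator set for $M$, then the equalities of associated ideals, and finally $S=[M^{-1}]R$.

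The substantive step is the left Ore condition for $M$. Given $r\in R$ and $m\in M$, the element $\varphi(r)\varphi(m)^{-1}$ of $S$ makes sense because $\varphi(m)\in U(S)$, and it can be written as $\varphi(t)^{-1}\varphi(s)$ for some $t\in T$, $s\in R$. Clearing denominators gives $\varphi(tr)=\varphi(sm)$, hence $tr-sm\in\ker\varphi=\ass(T)$, so $t_1(tr-sm)=0$ for some $t_1\in T$. Setting $m':=t_1t$ and $r':=t_1s$, we have $m'\in T\subseteq M$ (as $T$ is a submonoid), $r'\in R$, and $m'r=r'm$, so $Mr\cap Rm\neq\emptyset$. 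For the second condition, if $rm=0$ with $m\in M$, then applying $\varphi$ and cancelling the unit $\varphi(m)$ yields $\varphi(r)=0$, so $r\in\ker\varphi=\ass(T)$ and there is $t\in T\subseteq M$ with $tr=0$. Hence $M$ is a left denominator set.

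The equalities $\ass(T)=\ass(M)=\mathfrak a$ then follow with no extra work: $\ass(T)\subseteq\ass(M)$ because $T\subseteq M$, while if $r\in\ass(M)$, say $mr=0$ with $m\in M$, the cancellation argument above gives $\varphi(r)=0$, so $r\in\ker\varphi=\ass(T)$; and $\ker\varphi=\ass(T)=\mathfrak a$ was recalled at the outset. For $S=[M^{-1}]R$ I would compare universal properties. Since $M$ is now a left denominator set, the canonical map $\varphi_M\colon R\to[M^{-1}]R$ exists and is universal among ring morphisms out of $R$ that invert $M$, while $\varphi\colon R\to[T^{-1}]R$ is universal among those inverting $T$. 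As $\varphi$ inverts $M$, there is a unique $g\colon[M^{-1}]R\to S$ with $g\varphi_M=\varphi$; as $\varphi_M$ inverts $T$ (because $T\subseteq M$), there is a unique $h\colon S\to[M^{-1}]R$ with $h\varphi=\varphi_M$. Then $gh\varphi=g\varphi_M=\varphi$ and $hg\varphi_M=h\varphi=\varphi_M$, so the uniqueness clauses force $gh=\mathrm{id}_S$ and $hg=\mathrm{id}_{[M^{-1}]R}$. Thus $\varphi$ is, up to isomorphism, the localization at $M$, i.e. $S=[M^{-1}]R$. Equivalently, one may check directly that $\varphi$ satisfies the defining properties of a left ring of fractions at $M$: $\varphi(M)\subseteq U(S)$ by definition of $M$, every element of $S$ is of the form $\varphi(m)^{-1}\varphi(r)$ with $m\in M$ (immediate from $T\subseteq M$), and $\ker\varphi=\ass(M)$ by the equality just proved.

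The only delicate point is the left Ore condition for $M$: it is precisely there that one must transport an equation from $S$ back into $R$, using simultaneously that every element of $S$ is $\varphi(t)^{-1}\varphi(s)$ and that $\ker\varphi=\ass(T)$. Every remaining step is a routine consequence of the inclusion $T\subseteq M$ together with the invertibility of $\varphi(m)$ for $m\in M$.
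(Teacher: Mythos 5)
Your proposal is correct and follows essentially the same route as the paper: the left Ore condition for $M$ is obtained by writing $\varphi(r)\varphi(m)^{-1}$ as a left fraction $\varphi(t)^{-1}\varphi(s)$, clearing denominators, and using $\ker\varphi=\ass(T)$ to turn the resulting equation in $S$ into one in $R$; the second denominator condition and the identifications $\ass(T)=\ass(M)$ and $S=[M^{-1}]R$ are handled the same way (you merely spell out the universal-property comparison that the paper dismisses as ``now clear''). No gaps.
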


\begin{proof} It is well known that $\mathfrak a=\ker(\varphi)=\ass(T)$. Moreover, $M\supseteq T$. Let us prove that $M$ is a left Ore subset of $R$. Fix $r\in R$ and $m\in M$. We must show that $Mr\cap Rm\ne\emptyset$. Now $\varphi(r)\varphi(m)^{-1}\in S=[T^{-1}]R$, so that there exist $r'\in R$ and $t\in T$ such that $\varphi(r)\varphi(m)^{-1}=\varphi(t)^{-1}\varphi(r')$. Then $\varphi(t)\varphi(r)=\varphi(r')\varphi(m)$ in $S$, so that there exists $t'\in T$ with $rtt'=mr't'\in rT\cap mR\subseteq rM\cap mR$. $t'tr=t'r'm\in Tr\cap Rm\subseteq Mr\cap Rm$. This proves that $M$ is a left Ore subset of $R$. 

In order to see that $M$ is a left denominator set, notice that if $r\in R$, $m\in M$ and $rm=0$, then $\varphi(r)\varphi(m)=0$, so $\varphi(r)=0$. Hence $r\in\ker(\varphi)=\ass(T)$, so that $tr=0$ for some $t\in T$. But $T\subseteq M$. This proves that  $M$ is a left denominator set. It is now clear that $S=[T^{-1}]R=[M^{-1}]R$, and thus $\mathfrak a=\ker(\varphi)=\ass(M)$.\end{proof}

Clearly, Lemma~\ref{2'} holds not only for left denominator sets and left rings of fractions, but also for right denominator sets and right rings of fractions, because associating the pair $(\mathfrak a,M)$ to a ring morphism $\varphi$ is left/right symmetric.

\bigskip
\begin{remark}\label{inverse}{\rm\,We have already noticed in Remark~\ref{4} that if there exists a ring morphism $\psi\colon S\to S'$ such that $\psi\varphi=\varphi'$, then  $(\mathfrak a,M)\le (\mathfrak a',M')$. This can be inverted for left localizations, i.e., if $S=[T^{-1}]R$ and $S'=[T'{}^{-1}]R$ for suitable left denominator sets $T,T'$, $\varphi\colon R\to S$, $\varphi'\colon R\to S'$ are the canonical mappings, and $(\mathfrak a,M)\le (\mathfrak a',M')$, then  there exists a ring morphism $\psi\colon S\to S'$ such that $\psi\varphi=\varphi'$.  To prove it, suppose that $S=[T^{-1}]R$ and $S'=[T'{}^{-1}]R$ for left denominator sets $T,T'$, that $\varphi\colon R\to S$, $\varphi'\colon R\to S'$ are the canonical mappings and $(\mathfrak a,M)\le (\mathfrak a',M')$. Since $M\subseteq M'$, so $T\subseteq M\subseteq M'$, the elements of $T$ are mapped to invertible elements of $S'=[M'{}^{-1}]R$ via the canonical mapping $\varphi'\colon R\to S'$. By the universal property of the mapping $\varphi\colon R\to S=[T^{-1}]R$, there exists a unique ring morphism $\psi\colon S\to S'$ such that $\psi\varphi=\varphi'$.

Similarly for the equivalence relation $\sigma$:  If $S=[T^{-1}]R$ and $S'=[T'{}^{-1}]R$ for left denominator sets $T,T'$, and $\varphi\colon R\to S$, $\varphi'\colon R\to S'$ are the canonical mappings, then $\varphi\,\sigma\,\varphi'$ if and only if  there exists a ring isomorphism $\psi\colon S\to S'$ such that $\psi\varphi=\varphi'$.}\end{remark}

Finally, we have already remarked in the Introduction that a ring morphism $\varphi\colon R\to S$ is local if and only if $M=U(R)$. Moreover, $\ker(\varphi)\subseteq J(R)$ for every local morphism $\varphi\colon R\to S$ \cite[Lemma~3.1]{dolors2}. It follows that local morphisms correspond to the least elements of $\Hom(R,\mathfrak a)$ with respect to the partial order $\le$. More precisely:

\begin{proposition}\label{7.4} Let $\varphi\colon R\to S$ be a ring morphism and $(\mathfrak a, M)$ its corresponding pair in $\Hom(R)$. Then $\varphi$ is a local morphism if and only if $(\mathfrak a, M)$ is the least element of $\Hom(R,\mathfrak a)$ for some ideal $\mathfrak a\subseteq J(R)$.\end{proposition}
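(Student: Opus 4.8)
The plan is to reduce the statement to the two facts recalled immediately before it: that $\varphi$ is local if and only if $M=U(R)$, and that $\ker(\varphi)\subseteq J(R)$ whenever $\varphi$ is local. The first observation is that inside a single fibre $\Hom(R,\mathfrak a)$ every pair has the same first coordinate $\mathfrak a$, so the partial order $\le$ restricts there to inclusion of the monoid components: $(\mathfrak a,M)\le(\mathfrak a,M')$ if and only if $M\subseteq M'$. Since $U(R)\subseteq M'$ for every $(\mathfrak a,M')\in\Hom(R,\mathfrak a)$ by Lemma~\ref{1}(2), the pair $(\mathfrak a,U(R))$ is automatically the least element of $\Hom(R,\mathfrak a)$ as soon as it lies in that set. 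Thus everything hinges on deciding when $(\mathfrak a,U(R))\in\Hom(R,\mathfrak a)$.

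For the forward implication I would argue: if $\varphi$ is local, then $M=U(R)$ and $\mathfrak a=\ker(\varphi)\subseteq J(R)$ by the recalled facts, and by the previous paragraph $(\mathfrak a,M)=(\mathfrak a,U(R))$ is then the least element of $\Hom(R,\mathfrak a)$, with $\mathfrak a\subseteq J(R)$ as required. For the converse, assume $(\mathfrak a,M)$ is the least element of $\Hom(R,\mathfrak a)$ with $\mathfrak a\subseteq J(R)$. I would exhibit the canonical projection $\pi\colon R\to R/\mathfrak a$, which has kernel $\mathfrak a$, and show that it realizes the pair $(\mathfrak a,U(R))$, that is, $\pi^{-1}(U(R/\mathfrak a))=U(R)$. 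Granting this, $(\mathfrak a,U(R))\in\Hom(R,\mathfrak a)$; leastness of $(\mathfrak a,M)$ forces $M\subseteq U(R)$, while Lemma~\ref{1}(2) gives the reverse inclusion, so $M=U(R)$ and $\varphi$ is local.

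The one substantive point, and the only place where the hypothesis $\mathfrak a\subseteq J(R)$ is genuinely used, is the identity $\pi^{-1}(U(R/\mathfrak a))=U(R)$. The inclusion $\supseteq$ is immediate. For $\subseteq$, if $r+\mathfrak a$ is invertible in $R/\mathfrak a$ then there is $s\in R$ with $rs-1,\ sr-1\in\mathfrak a\subseteq J(R)$; since $1+J(R)\subseteq U(R)$, both $rs$ and $sr$ are units of $R$, whence $r$ is left and right invertible, hence a unit. This is the standard \emph{lifting of units modulo the Jacobson radical} argument and is the technical heart of the proof; the rest is bookkeeping with the order on the fibres and Lemma~\ref{1}(2).
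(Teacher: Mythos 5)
Your proof is correct and follows essentially the same route as the paper's: both directions reduce to the pair of the canonical projection $\pi\colon R\to R/\mathfrak a$ together with the lifting of units modulo $J(R)$, the only cosmetic difference being that you establish $\pi^{-1}(U(R/\mathfrak a))=U(R)$ first and then invoke leastness, whereas the paper first identifies $M$ with $\pi^{-1}(U(R/\mathfrak a))$ via Proposition~\ref{5.4} and then lifts units. Your spelled-out forward implication is exactly what the paper dismisses as trivial.
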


\begin{proof} Suppose that $(\mathfrak a, M)$ is the least element of $\Hom(R,\mathfrak a)$ for some ideal $\mathfrak a\subseteq J(R)$. By Proposition~\ref{5.4}, the least element of $\Hom(R,\mathfrak a)$ is $(\mathfrak a, \pi^{-1}(U(R/\mathfrak a)))$, where $\pi\colon R\to R/\mathfrak a$ is the canonical projection. Thus $M=\pi^{-1}(U(R/\mathfrak a))$. Let us prove that $\varphi$ is local. If $r\in R$ and $\varphi(r)$ is invertible in $S$, then $r\in M$, so that $r\in \pi^{-1}(U(R/\mathfrak a))$. Hence $r+\mathfrak a$ is invertible in $R/\mathfrak a$. Hence $r+J(R)$ is invertible in $R/J(R)$, so $r$ is invertible in $R$, as desired. This proves that $\varphi$ is a local morphism. The inverse implication is trivial. \end{proof}

More generally, for an arbitrary proper ideal $\mathfrak a$ of $R$, not-necessarily contained in $J(R)$, we have that:

\begin{proposition}\label{5.4} For every proper ideal $\mathfrak a$ of a ring $R$, the partially ordered set $\Hom(R, \mathfrak a)$ always has a least element, which is the pair $(\mathfrak a, M)$ corresponding to the canonical projection $\pi\colon R\to R/\mathfrak a$, that is, the pair $(\mathfrak a, M)$ with $M=\pi^{-1}(U(R/\mathfrak a))$.\end{proposition}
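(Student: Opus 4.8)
The plan is to verify directly that the pair $(\mathfrak a,\pi^{-1}(U(R/\mathfrak a)))$ both lies in $\Hom(R,\mathfrak a)$ and sits below every other element in the induced order. First I would observe that, since $\mathfrak a$ is a proper ideal, $R/\mathfrak a$ is a nonzero ring, so the canonical projection $\pi\colon R\to R/\mathfrak a$ is a legitimate object of $\cHom(R)$ (it sends $1\ne 0$); its kernel is exactly $\mathfrak a$ and its associated monoid is $\pi^{-1}(U(R/\mathfrak a))$, so the pair $(\mathfrak a,\pi^{-1}(U(R/\mathfrak a)))$ indeed belongs to $\Hom(R,\mathfrak a)$.

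Next I would note that, because every pair in $\Hom(R,\mathfrak a)$ has $\mathfrak a$ as its first component, the restriction of the partial order $\le$ to $\Hom(R,\mathfrak a)$ reduces to inclusion of the second components: $(\mathfrak a,M)\le(\mathfrak a,M')$ if and only if $M\subseteq M'$. Thus it suffices to show that $\pi^{-1}(U(R/\mathfrak a))$ is contained in the monoid component of every element of $\Hom(R,\mathfrak a)$.

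The heart of the argument is then the following. Let $(\mathfrak a,M')\in\Hom(R,\mathfrak a)$ be arbitrary, realized by a morphism $\varphi\colon R\to S$ with $\ker(\varphi)=\mathfrak a$ and $M'=\varphi^{-1}(U(S))$. Since $\ker(\varphi)\supseteq\mathfrak a$, the universal property of the quotient ring furnishes a unique morphism $\overline{\varphi}\colon R/\mathfrak a\to S$ with $\varphi=\overline{\varphi}\pi$. Because any ring morphism carries units to units, $\overline{\varphi}(U(R/\mathfrak a))\subseteq U(S)$, whence $\pi^{-1}(U(R/\mathfrak a))\subseteq\pi^{-1}(\overline{\varphi}^{-1}(U(S)))=\varphi^{-1}(U(S))=M'$. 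This gives $(\mathfrak a,\pi^{-1}(U(R/\mathfrak a)))\le(\mathfrak a,M')$, so the pair associated to $\pi$ is the least element, as claimed.

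I expect no serious obstacle here; the only point requiring a little care is to recognize that the inclusion uses solely the fact that morphisms preserve units (and not injectivity of $\overline{\varphi}$), and that the factorization through $\pi$ is available precisely because $\ker(\varphi)=\mathfrak a$. It is worth contrasting this with Proposition~\ref{7.4}: the least element of $\Hom(R,\mathfrak a)$ is exactly the one for which no element outside $U(R/\mathfrak a)$ becomes invertible, which is what singles out the quotient projection among all morphisms with kernel $\mathfrak a$.
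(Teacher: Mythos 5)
Your proof is correct and follows essentially the same route as the paper: factor any $\varphi$ with $\ker(\varphi)=\mathfrak a$ through the canonical projection $\pi$ via the first isomorphism theorem and conclude $\pi^{-1}(U(R/\mathfrak a))\subseteq\varphi^{-1}(U(S))$. You merely make explicit the step the paper leaves as ``easily checked,'' namely that ring morphisms carry units to units.
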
 

\begin{proof} We must show that, for every ring morphism $\varphi\colon R\to S$ with $\ker(\varphi)=\mathfrak a$, we have $\pi^{-1}(U(R/\mathfrak a))\subseteq \varphi^{-1}(U(S))$.
Now, given $\varphi\colon R\to S$ with $\ker(\varphi)=\mathfrak a$, let $\pi\colon R\to R/\mathfrak a$ denote the canonical projection. By the first isomorphism theorem for rings, there exists a unique injective ring morphism $\widetilde{\varphi}\colon R/\mathfrak a\to S$ such that $\varphi=\widetilde{\varphi}\pi$. It is now easily checked that $\pi^{-1}(U(R/\mathfrak a))\subseteq \varphi^{-1}(U(S))$.
\end{proof}

We conclude the paper indicating a further possible generalization of our the results in this paper. In Remark~\ref{vvv}, we have already mentioned Cohn's spectrum $\mathbf{X}(R)$ of a ring $R$, consisting  of all epic $R$-fields, up to isomorphism. P. M. Cohn has shown that any epic $R$-field $R\to D$ is characterized up to isomorphism by the collection of square matrices
with entries in $R$ which are carried to singular matrices with entries in the division ring $D$. He has also given the conditions
under which a collection of square matrices over $R$ is of this type, calling such a
collection a ``prime matrix ideal'' of $R$. The natural ideal is therefore to refine the theory developed in the previous sections, classifying all morphisms $\varphi\colon R\to S$, not only via our pairs $(\frak a, M)$, where $M$ is the set of all elements of $R$ mapped to invertible elements of $R$, but also via the collection of all $n\times m$ matrices with entries in $R$ which are carried to invertible $n\times m$ matrices with entries in the ring $S$.

\end{document}